\newtheorem{theorem}{Theorem}
\newtheorem*{theorem*}{Theorem}
\newtheorem*{corollary*}{Corollary}
\newtheorem{proposition}[theorem]{Proposition}
\newtheorem{proposition*}[theorem]{Proposition}
\newtheorem{corollary}[theorem]{Corollary}
\newtheorem{lemma}[theorem]{Lemma}
\theoremstyle{definition}
\newtheorem{definition}[theorem]{Definition}
\theoremstyle{remark}
\newtheorem{remark}[theorem]{Remark}
\newtheorem{example}[theorem]{Example}
\newtheorem*{example*}{Example}
\DeclareMathOperator{\rank}{rank}
\begin{document}

\title{A Complete Characterization of Pythagorean Hodograph Preserving Mappings}

\author[A. Altavilla]{Amedeo Altavilla}\address{Dipartimento di Matematica,
  Universit\`a degli Studi di Bari Aldo Moro, via Edoardo Orabona, 4, 70125,
  Bari, Italia}\email{amedeo.altavilla@uniba.it}

\author[H.-P. Schr\"ocker]{Hans-Peter Schr\"ocker}
\address{ University of Innsbruck, Department of Basic Sciences in Engineering Sciences, Technikerstra\ss e 13, 6020 Innsbruck, Austria}
\email{hans-peter.schroecker@uibk.ac.at }

\author[Z. \v{S}\'ir]{Zbyn\v{e}k \v{S}\'ir}
\address{Charles University, Faculty of Mathematics and Physics, Sokolovsk\'a 83, Prague 186 75, Czech Republic}
\email{zbynek.sir@karlin.mff.cuni.cz}

\author[J. Vr\v{s}ek]{Jan Vr\v{s}ek}
\address{Department of Mathematics, Faculty of Applied Sciences, University of West Bohemia, Univerzitn\'i 8, 301 00 Plze\v{n}, Czech Republic and NTIS – New Technologies for the Information Society, Faculty of Applied Sciences, University of West Bohemia, Univerzitn\'i 8, 301 00 Plze\v{n}, Czech Republic}
\email{vrsekjan@kma.zcu.cz}

\date{\today}
\thanks{Amedeo Altavilla was partially supported by PRIN 2022MWPMAB - “Interactions between Geometric Structures and Function Theories” and by GNSAGA of INdAM}

\subjclass[2020]{Primary 65D17, 53A30; Secondary 30C35}

\keywords{Pythagorean–hodograph curves, PH-preserving mappings, conformal geometry, conformal maps, Möbius transformations}

\begin{abstract}
We fully characterize the mappings $\Phi$ that send every Pythagorean-hodograph (PH) curve to a PH curve. We prove that in any dimension, such mappings are precisely the conformal functions whose dilation is the square of a real rational function. In the planar case, this implies (up to conjugation) that
$
\partial\Phi/\partial z = \Psi^{2},
$
where $\Psi$ is meromorphic and satisfies $\operatorname{Res}(\Psi^{2}) = 0$ at every pole. In higher dimensions, PH-preservation forces $\Phi$ to be a conformal map; for $n \ge 3$, Liouville's theorem then implies that any local diffeomorphism with this property is (anti-)Möbius. These results subsume the previously known ``(scaled) PH-preserving'' constructions of mappings $\mathbb{R}^2 \to \mathbb{R}^3$ and align with Ueda's conformal viewpoint on isothermal and spherical geometries. At the level of examples, we demonstrate how PH-preserving mappings relate to the construction of rational PH curves and minimal surfaces.

\end{abstract}

\maketitle

\section{Introduction}

A parametric rational curve $r(t)$ is called a \emph{Pythagorean-hodograph (PH) curve} if its squared speed  is a rational square: $\|r'(t)\|^2 = \sigma(t)^2$ for some rational function $\sigma(t)$.
An important subclass is polynomial PH curves where both $r(t)$ and $\sigma(t)$ are polynomial. Linearly parametrized straight lines are the simplest examples. The theory of Pythagorean-hodograph (PH) curves sits at the intersection of CAGD and abstract algebraic and differential geometry. On the CAGD side, exact arc length and rational offsets motivate a classical theory \cite{farouki08,farouki90c,farouki94a}, with quaternionic/Clifford descriptions enabling compact hodograph representations, rational rotation-minimizing frames, and effective interpolation schemes \cite{choi02b,Farouki2002,FaroukiGentiliGiannelliSestiniStoppato2017,SCHROCKER2023128214}.

On the geometry side, the PH structure interacts naturally with conformality and, at the level of surfaces, with the Weierstraß-Enneper apparatus and modern constructions of minimal and conformal polynomial surfaces \cite{FAROUKI2022127439,Hao2020,Perez_Fernandez_2023}. Already in the 1990s, Ueda highlighted the presence of PH curves on isothermal and spherical surfaces \cite{ueda,ueda2}, foreshadowing a structural link between the PH property and conformality. An early application of Ueda's results to the construction of rational curves with rational rotation-minimizing frames via M\"obius transformations appears in~\cite{BartonJuettlerWang2010}.
So-called scaled PH-preserving maps were introduced and constructed in~\cite{KIM2008217}, where the authors provide a sufficient condition for PH-preserving mappings from $\mathbb{R}^2$ to $\mathbb{R}^3$. 

In this paper, we generalize this result to any dimension and address the more difficult necessity of the condition. More precisely, we show that a rational map $\Phi\colon\mathbb{R}^m\to\mathbb{R}^n$ is PH-preserving if and only if it is conformal and its dilation is a rational square. In the complex plane this becomes the factorization $\Phi'(z)=\Psi(z)^2$ with $\Psi$ rational/meromorphic; the zero-residue condition on $\Psi^2$ is precisely the obstruction to rational primitive, matching the construction-first viewpoint of complex analysis.
 In higher codimension the condition forces conformal immersions; in equal dimension $m=n\ge3$, Liouville rigidity yields Möbius transformations~\cite{Ahlfors1986}.

Conceptually, the present results complement our recent quaternionic framework for minimal surfaces \cite{altavillaMSvCQ2025}. There we showed that isothermal minimal surfaces admit a complex-quaternionic representation by conjugation---mirroring PH preimage factorizations---and we related algebraic constructions to classical Weierstraß-Enneper data. Here, we show that the \emph{mapping} side of PH geometry is equally rigid: PH-preservation is exactly conformality, with the planar classification driven by a square factorization of $\Phi'$ and a ``zero-residue'' condition ensuring rational primitives. This places PH-preserving transformations and minimal-surface parametrizations within a unified algebraic-geometric pattern.

Technically, the most involved part of our paper is the proof that the above condition for PH mappings is necessary. We combine a \emph{metric rigidity principle}, arising from the requirement that the images of all lines remain PH, with a single nonlinear PH test curve—the Tschirnhaus cubic \cite{MEEK1997299}—to show that the first fundamental form must be, pointwise, a scalar multiple of the identity.

The remainder of our paper is organized as follows.
Section~\ref{sec:definitions-results}, after fixing the notation, recalls and elaborates on the concepts of PH curves and PH-preserving mappings in Section~\ref{sec:definitions}. Section~\ref{sec:results} contains, while postponing parts of the proof, the precise formulation of our main result in Theorem~\ref{th:RmtoRn}, and then deduces from it several corollaries for special dimensions. Section~\ref{sec:examples} presents several interesting constructions that further elucidate our theoretical results and demonstrate their applicability. Section~\ref{sec:proofs} is devoted to proving the converse implication in
Theorem~\ref{th:RmtoRn}.

Section~\ref{sec:planar}, after handling the trivial case $m=1$, proves the characterization for $m=2$ by combining the ``PH-preserving on lines $\implies$ metric rigidity'' mechanism with a single nonlinear PH test on the Tschirnhaus cubic \cite{MEEK1997299} to force full conformality. Section~\ref{sec:higher-dimensional} proves the higher-dimensional case by slicing the space into 2-planes, thereby reducing the problem to the 2D case. Finally, we draw some conclusions in Section~\ref{sec:conclusions}.

\section{Definitions and main results}
\label{sec:definitions-results}

In this section we provide precise definitions of fundamental concepts together with some examples and we state our central results.

We denote by \(\langle \cdot, \cdot\rangle\) the standard Euclidean inner product on \(\mathbb{R}^n\). The induced norm is accordingly given by \(\norm{\cdot}\). For a smooth mapping \(\Phi\colon \mathbb{R}^m \to \mathbb{R}^n\), we use the notation \(\partial_i \Phi = \frac{\partial \Phi}{\partial u_i}\) for \(i=1,2,\ldots,m\), where \(u=(u_1,u_2,\ldots,u_m)\) are Cartesian coordinates in the parameter domain \(\mathbb{R}^m\). 

The \emph{first fundamental form} of the mapping \(\Phi\) is defined as the symmetric positive-semidefinite matrix
\[
G(u) = \big(g_{ij}(u)\big)_{i,j=1,2,\ldots,m} \quad \text{with} \quad g_{ij}(u) = \langle \partial_i \Phi(u), \partial_j \Phi(u)\rangle.
\]
This object measures the inner products of the tangent vectors at the point \(\Phi(u)\) and encodes the induced Riemannian metric on the parameter domain. For any direction \(d \in \mathbb{R}^m\), we write the associated quadratic form as
\[
Q_u(d) = d^\top G(u) d,
\]
which geometrically represents the squared length of the tangent vector in the direction~\(d\).

\subsection{PH Curves and PH-preserving mappings}
\label{sec:definitions}
We proceed with a definition of PH curves.

\begin{definition}[Pythagorean-Hodograph (PH) Curves]
\label{def:PH-curves}
A rational map \(r\colon \mathbb{R} \to \mathbb{R}^m\) is said to be a \emph{Pythagorean-hodograph (PH) curve} if its derivative \(r'(t)\) satisfies
\begin{equation}
    \label{PHcond}
\norm{r'(t)}^2 = \sigma(t)^2
\quad \text{for some rational function $\sigma(t)$.}
\end{equation}
A very important special case is \emph{polynomial PH curves} where $r$ and $\sigma$ are polynomial.
\end{definition}

\begin{example}
\label{ex:affine-line}
A simple but relevant example of PH curves are affine lines \(r(t) = a + t d\) in \(\mathbb{R}^m\). Since the velocity vector \(d\) is constant, its squared norm \(\norm{d}^2\) is constant and therefore trivially a  square.
\end{example}

\begin{example}
\label{ex:tschirnhausen}
A second example of a polynomial PH curve that will be important for us in the proof of Theorem~\ref{th:RmtoRn} is the Tschirnhaus cubic
\begin{equation}
\label{eq:tschirnhausen}
r(t)=\bigl( t^2-1, \tfrac{1}{\sqrt{3}}t(t^2-1) \bigr)
\end{equation}
\cite{farouki90c,MEEK1997299}. Its derivative is $r'(t) = \bigl(2t, \frac{1}{\sqrt{3}}(3t^2-1)\bigr)$ and $\Vert r'(t) \Vert^2 = (\sqrt{3}t^2+\frac{1}{\sqrt{3}})^2$, so that the PH condition is fulfilled.
\end{example}
\begin{example}
One of the simplest examples of a rational (non-polynomial) PH curve is given in \cite[Theorem~7]{kozak14}:
\[
r(t)=\frac{-1}{60(t^2+1)}
\begin{pmatrix} t(t^2-4) \\ 2t(3t-1) \\ t(3t+4) \end{pmatrix}.
\]
Because of
\[
\Vert r'(t) \Vert^2 = \left (\frac{t^2+6}{60(t^2+1)} \right )^2
\]
it is indeed PH.
\end{example}

The central concept of this text is maps that preserve the PH property of curves. Our formal definition generalizes and systematizes the scaled PH-preserving constructions of \cite{KIM2008217} and aligns with the conformal viewpoints in \cite{ueda,ueda2}.

\begin{definition}
\label{def:PH-preserving}
We call a map $\Phi\colon\mathbb R^m\to\mathbb R^n$ \emph{PH-preserving} if 
the image of every PH curve is again a PH curve and
$\rank D\Phi(u)=m$ for almost all $u \in \mathbb{R}^m$.
\end{definition}

In Definition~\ref{def:PH-preserving} we consider only regular maps. This implies $m \le n$. There exist non-regular rational maps that map PH curves to PH curves but they are not geometrically meaningful. We illustrate this in two examples:

\begin{example*}
Any constant map $\Phi\colon \mathbb{R}^m \to \mathbb{R}^n$, $u \mapsto c$ maps PH curves to PH curves, since $\|(\Phi\circ r)'(t)\|^2\equiv0=0^2$ for all curves $r$. Its differential vanishes identically, so this case is completely singular.
\end{example*}

\begin{example*}
The projection map $\Pi(x,y)=(x,0)$ sends any PH curve $r(t)=(x(t),y(t))$ to $\Pi\circ r(t)=(x(t),0)$ with $\|(\Pi\circ r)'(t)\|^2=(x'(t))^2$, a square. Hence $\Pi$ maps PH curves to PH curves, but $D\Pi=\begin{psmallmatrix}1&0\\ 0&0\end{psmallmatrix}$ has rank $1$ everywhere, so $\Pi$ is not PH-preserving in the sense of Definition~\ref{def:PH-preserving}.
\end{example*}

\subsection{Main results}
\label{sec:results}
The following theorem is our central result. The sufficiency of the stated condition was already established for the case $m=2$, $n=3$ in \cite{KIM2008217}.

\begin{theorem}\label{th:RmtoRn}
Let $\Phi\colon\mathbb R^m\to\mathbb R^n$. Then $\Phi$ is PH-preserving if and only if it is rational and its first fundamental form satisfies
\begin{equation}
\label{eq:scaled-ph-preserving}
G=\lambda^2 I_m
\end{equation}
with $\lambda$ a non-zero real rational function. In particular, $\Phi$ is conformal with square rational dilation. Moreover, $\Phi$ preserves the subclass of polynomial PH curves if and only if it is a polynomial map. 
\end{theorem}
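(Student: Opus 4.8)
The plan is to prove the two equivalences separately, treating the ``only if'' directions as the substantive ones and the ``if'' directions as verifications.

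First consider the main equivalence (PH-preserving $\iff$ rational with $G = \lambda^2 I_m$). For \emph{sufficiency}, suppose $\Phi$ is rational with $G = \lambda^2 I_m$ for a non-zero real rational $\lambda$. Given any PH curve $r(t)$ with $\|r'(t)\|^2 = \sigma(t)^2$, the chain rule gives $(\Phi\circ r)'(t) = D\Phi(r(t))\,r'(t)$, hence
\[
\|(\Phi\circ r)'(t)\|^2 = r'(t)^\top G(r(t))\, r'(t) = \lambda(r(t))^2\,\|r'(t)\|^2 = \bigl(\lambda(r(t))\,\sigma(t)\bigr)^2,
\]
and $\lambda(r(t))\sigma(t)$ is rational since compositions and products of rational functions are rational; together with $\rank D\Phi = m$ (forced by $\lambda \neq 0$), this shows $\Phi$ is PH-preserving. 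For \emph{necessity}, one must first argue that a PH-preserving $\Phi$ is rational — applying PH preservation to a suitable family of lines (Example~\ref{ex:affine-line}) and exploiting that the squared speed of each image line is a rational \emph{square}, one recovers rationality of the component functions. The heart of the matter, namely $G = \lambda^2 I_m$ pointwise, is exactly what Theorem~\ref{th:RmtoRn} defers to Sections~\ref{sec:proofs}--\ref{sec:higher-dimensional}: the metric-rigidity argument from images of lines forces $G$ to be a scalar multiple of $I_m$, and the Tschirnhausen-cubic test (Example~\ref{ex:tschirnhausen}) upgrades that scalar to a rational \emph{square}. So here I would simply invoke that part of the theorem. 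The final clause ``$\Phi$ is conformal with square rational dilation'' is then immediate from the definition of conformality: $G = \lambda^2 I_m$ says precisely that $D\Phi$ scales all lengths by the factor $|\lambda|$, i.e. $\Phi$ is conformal with dilation $\lambda^2$.

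Now the polynomial refinement: $\Phi$ preserves polynomial PH curves $\iff$ $\Phi$ is polynomial. For the ``if'' direction, assume $\Phi$ polynomial. Then $\Phi$ is in particular rational, and its first fundamental form entries $g_{ij} = \langle \partial_i\Phi, \partial_j\Phi\rangle$ are polynomials; since $\Phi$ preserves all PH curves by the main equivalence, we have $G = \lambda^2 I_m$, so $g_{11} = \lambda^2$ is a polynomial, forcing $\lambda$ to be polynomial (a rational function whose square is a polynomial is, up to sign, a polynomial). For any polynomial PH curve $r$ with polynomial $\sigma$, the computation above gives squared image speed $(\lambda(r(t))\sigma(t))^2$ with $\lambda\circ r\cdot\sigma$ polynomial, and $\Phi\circ r$ is polynomial; hence the image is a polynomial PH curve. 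For the ``only if'' direction, suppose $\Phi$ preserves polynomial PH curves. Feeding in the affine lines $r(t) = a + td$ of Example~\ref{ex:affine-line} (which are polynomial PH curves), the images $\Phi(a+td)$ must be polynomial curves for every choice of $a,d$; by varying $a$ and $d$ over $\mathbb{R}^m$ one concludes that $\Phi$ itself has polynomial component functions — e.g. restricting to lines parallel to the coordinate axes shows each partial ``slice'' $t\mapsto \Phi(a+te_i)$ is polynomial in $t$ of bounded degree, and a standard multivariate argument (a rational function that is polynomial along every axis-parallel line is a polynomial) finishes it. Note that one first needs $\Phi$ rational, which follows from the main equivalence since polynomial PH curves are PH curves, so $\Phi$ is already known to be rational and PH-preserving.

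The main obstacle is not in any of the steps written above — those are essentially bookkeeping with the chain rule and elementary facts about rational versus polynomial functions — but in the deferred necessity statement of the main equivalence, i.e. proving $G = \lambda^2 I_m$. That is precisely why the paper isolates it in Sections~\ref{sec:proofs}--\ref{sec:higher-dimensional}. The one genuinely delicate point in the \emph{polynomial} half is the implication ``polynomial along every axis-parallel line $\Rightarrow$ polynomial'': for a general smooth map this is false, but here $\Phi$ is already known to be rational, and a rational function of several variables that restricts to a polynomial on a Zariski-dense family of lines cannot have any pole, hence is polynomial — so the argument goes through once rationality is in hand.
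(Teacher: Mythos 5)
Your proposal follows essentially the same route as the paper: the sufficiency computation via the chain rule is identical, and the necessity of $G=\lambda^2 I_m$ is, exactly as in the paper, deferred to the metric-rigidity-on-lines argument plus the Tschirnhausen-cubic test of Sections~\ref{sec:planar}--\ref{sec:higher-dimensional}. Two remarks on the differences. First, your description of the deferred mechanism slightly misallocates the roles of the two tests: in the paper, PH preservation on lines already yields $G=\lambda^2 S$ with $\lambda$ a rational function (so the ``rational square'' structure comes from the square-extraction lemmas at that stage) and a \emph{constant} symmetric positive-definite $S$, while the Tschirnhausen cubic is what forces $S$ to be a scalar multiple of $I_2$ --- not the other way around. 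Second, you supply arguments for two points the paper's proof passes over in silence: that a PH-preserving map is rational (the paper's Lemma~\ref{lem:metric-rigidity} simply \emph{assumes} rationality), and the entire polynomial clause. Your treatment of the latter is sound --- $\lambda^2=g_{11}$ polynomial forces $\lambda$ polynomial by coprimality, and conversely polynomiality of $\Phi$ along all lines together with the already-established rationality forces the denominator to be constant (a generic line meets the pole divisor of a genuinely rational function in a way that survives restriction) --- though it implicitly reads the ``moreover'' clause as quantified over maps already known to be PH-preserving, which is the only reading under which it is true. These additions are a net gain over the paper's own proof rather than a deviation from it.
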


\begin{proof}
The sufficiency of the stated condition is straightforward. Clearly, $\Phi$ is regular. Moreover,
if $G=\lambda^2 I_m$ with rational $\lambda$, then for any PH curve $r$ the expression
\[
\|(\Phi\circ r)'(t)\|^2
= r'(t)^{\!\top}G( r(t))\,r'(t)
=\lambda( r(t))^2\,\| r'(t)\|^2
=\bigl(\lambda( r(t))\,\sigma(t)\bigr)^2,
\]
is a rational square, so $\Phi \circ r$ is PH.

Demonstrating that the condition is also necessary is substantially more challenging. We split the proof into two parts: In Section~\ref{sec:planar} we use algebraic tools to settle the case $m \in \{1,2\}$. The extension to higher dimensions $m>2$ is then achieved in Section~\ref{sec:higher-dimensional} by slicing $\mathbb{R}^m$ with two-dimensional planes.
\end{proof}

We will now explicitly study instances of Theorem \ref{th:RmtoRn} for several choices of the dimension. In the somewhat trivial case $m = 1$, we obtain nothing other than PH curves. Consequently, the notion of a PH-preserving mapping can be seen as a generalization of the notion of PH curves.

\begin{corollary}[$m=1$]
\label{cor:R1toRn}
A map $\Phi\colon\mathbb R\to\mathbb R^n$ is PH-preserving if and only if it is a PH curve.
\end{corollary}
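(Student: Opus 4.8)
The plan is to read off the $m=1$ case directly from Theorem~\ref{th:RmtoRn}, which has just been stated. When $m=1$, the parameter domain is $\mathbb{R}$, Cartesian coordinates reduce to a single variable $t$, and the map $\Phi\colon\mathbb{R}\to\mathbb{R}^n$ is nothing but a parametric curve. The first fundamental form $G$ is then the $1\times 1$ matrix $\big(\langle\Phi'(t),\Phi'(t)\rangle\big) = \big(\|\Phi'(t)\|^2\big)$, and the regularity condition $\rank D\Phi=1$ for almost all $t$ is automatic for any non-constant rational curve.

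First I would invoke the forward direction of Theorem~\ref{th:RmtoRn}: if $\Phi$ is PH-preserving, then it is rational and $G=\lambda^2 I_1$ for some non-zero real rational function $\lambda$, i.e.\ $\|\Phi'(t)\|^2 = \lambda(t)^2$. By Definition~\ref{def:PH-curves} this is precisely the statement that $\Phi$ is a PH curve. Conversely, if $\Phi$ is a PH curve, then $\|\Phi'(t)\|^2=\sigma(t)^2$ for a rational $\sigma$, so $G=\sigma^2 I_1$ and regularity holds wherever $\sigma\neq 0$; Theorem~\ref{th:RmtoRn} then gives that $\Phi$ is PH-preserving. Alternatively, and perhaps more transparently, one can argue the converse directly without the full strength of the theorem: the only PH curves with domain $\mathbb{R}^1$ are, up to reparametrization, governed by the single scalar $\|r'(t)\|^2$, and composing $\Phi$ with such an $r$ amounts to a rational reparametrization of a PH curve, which is again PH by the standard invariance of the PH property under rational reparametrization (this is the $1$-dimensional instance of the sufficiency computation already displayed in the proof of Theorem~\ref{th:RmtoRn}).

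There is essentially no obstacle here: the corollary is a specialization, and the only point requiring a word of care is confirming that Definition~\ref{def:PH-preserving} is not vacuous or contradictory in the case $m=n=1$ versus $m=1<n$. For $m=1$ the regularity hypothesis $\rank D\Phi=1$ a.e.\ simply excludes constant curves, which is consistent with the convention that a PH curve is non-constant (so that $\sigma$ in~\eqref{PHcond} is not forced to be identically zero in a degenerate way). I would also note, echoing the remark preceding the corollary, that this shows PH-preserving maps genuinely generalize PH curves: the case $m=1$ recovers the classical notion verbatim. I expect the write-up to be two or three sentences of deduction from Theorem~\ref{th:RmtoRn} plus this small remark on conventions.
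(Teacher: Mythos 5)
Your proposal is correct and follows essentially the same route as the paper: both simply observe that for $m=1$ the condition $G=\lambda^2 I_m$ of Theorem~\ref{th:RmtoRn} reads $\|\Phi'\|^2=\lambda^2$, which is verbatim the PH condition~\eqref{PHcond} with $\sigma=\lambda$. The extra remarks on reparametrization and regularity conventions are harmless but not needed.
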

\begin{proof}
    If $\Phi$ depends only on one variable, Equation~\eqref{eq:scaled-ph-preserving} reads $\Phi'\cdot \Phi'=\lambda^2$ which is precisely the PH condition~\eqref{PHcond} with $r=\Phi$ and $\sigma = \lambda$.
\end{proof}

In the special case $m=n=2$, PH-preserving maps from $\mathbb{R}^2 \to \mathbb{R}^2$ can be nicely described in the language of complex analysis.

\begin{corollary}[$m=n=2$]
\label{cor:R2toR2}
A map $\Phi\colon\mathbb C\to\mathbb C$ is PH-preserving if and only if, up to complex conjugation,
\[
\frac{\partial\Phi}{\partial z}(z)=\Psi(z)^2
\]
for some rational (meromorphic) function $\Psi$.
\end{corollary}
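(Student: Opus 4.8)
The plan is to translate the real condition $G = \lambda^2 I_2$ from Theorem~\ref{th:RmtoRn} into Wirtinger derivatives. Writing $\Phi = \Phi_1 + i\Phi_2$ and $z = u_1 + iu_2$, the complex-linear and complex-antilinear parts of $D\Phi$ are $\partial\Phi/\partial z = \tfrac12(\partial_1\Phi - i\,\partial_2\Phi)$ and $\partial\Phi/\partial\bar z = \tfrac12(\partial_1\Phi + i\,\partial_2\Phi)$. A direct computation shows that $G = \lambda^2 I_2$ is equivalent to the conformality equations $g_{11} = g_{22}$ and $g_{12} = 0$, which in turn are equivalent to the single complex equation $\langle\partial_1\Phi,\partial_1\Phi\rangle_{\mathbb C} - \langle\partial_2\Phi,\partial_2\Phi\rangle_{\mathbb C}$ vanishing together with its mixed term — compactly, to
\[
\frac{\partial\Phi}{\partial z}\cdot\frac{\partial\Phi}{\partial\bar z} = 0
\]
in the sense that one of the two Wirtinger derivatives vanishes identically on each connected component of the domain of regularity. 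Since $\Phi$ is assumed regular ($\rank D\Phi = 2$ a.e.), the vanishing factor cannot be the one carrying the rank, so after possibly replacing $\Phi$ by $\bar\Phi$ (complex conjugation in the target) we may assume $\partial\Phi/\partial\bar z \equiv 0$, i.e.\ $\Phi$ is holomorphic, and then $G = |\Phi'(z)|^2 I_2$, so $\lambda^2 = |\Phi'|^2 = \Phi'\overline{\Phi'}$.

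Next I would extract the square-root structure. We know from Theorem~\ref{th:RmtoRn} that $\Phi$ is rational as a map $\mathbb R^2\to\mathbb R^2$; being also holomorphic, $\Phi$ is a rational function of $z$ alone, hence $\Phi' = \partial\Phi/\partial z$ is rational in $z$. The real rational function $\lambda$ on $\mathbb R^2$ satisfies $\lambda^2 = |\Phi'|^2$; the point is to promote this to a meromorphic identity. Consider the meromorphic function $\Phi'(z)$ and its order at each zero or pole $z_0$: at a zero of multiplicity $k$ we get $|\Phi'|^2 \sim c\,|z - z_0|^{2k}$ near $z_0$, and $\lambda^2$ having this local form forces $k$ even when $\lambda$ is required to be a genuine (single-valued, real-analytic away from poles) rational function on the plane — the obstruction to a global holomorphic square root of $\Phi'$ is exactly the parity of its zero/pole orders, and that parity is pinned down by the existence of the rational $\lambda$. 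Concretely, $\lambda/|\Psi_0|$ would be a unimodular rational function for any local square root $\Psi_0$ of $\Phi'$, and one checks this forces all orders of $\Phi'$ to be even, so $\Phi'$ admits a global meromorphic square root $\Psi$ with $\Phi' = \Psi^2$; the ambiguity is only an overall sign, absorbed into $\Psi$.

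For the converse, given $\Phi$ with $\partial\Phi/\partial z = \Psi^2$ for $\Psi$ rational (or the conjugate situation), one simply reverses the computation: $\Phi$ is holomorphic (or antiholomorphic), its first fundamental form is $|\Psi^2|^2 I_2 = (|\Psi|^2)^2 I_2$, and $|\Psi|^2$ is a real rational function of $(u_1,u_2)$, so Theorem~\ref{th:RmtoRn} applies and $\Phi$ is PH-preserving. The main obstacle I anticipate is the middle step — passing from the pointwise real identity $\lambda^2 = |\Phi'|^2$ to the clean meromorphic factorization $\Phi' = \Psi^2$ — which requires a careful local analysis of orders of vanishing and a clear statement of why a rational $\lambda$ (as opposed to a merely continuous or multivalued one) forces even parity. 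One should also be slightly careful about the ``up to complex conjugation'' clause: it genuinely records the dichotomy between $\partial\Phi/\partial\bar z \equiv 0$ and $\partial\Phi/\partial z \equiv 0$, and both branches do occur (e.g.\ $\Phi(z) = \bar z$).
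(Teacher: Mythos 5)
Your first step (reading conformality off $G=\lambda^2 I_2$, the Wirtinger dichotomy, and using regularity to reduce, up to conjugation, to $\Phi$ holomorphic and rational in $z$ with $\lambda^2=|\Phi'|^2$) is exactly how the paper's proof begins, and your converse is the paper's sufficiency computation for Theorem~\ref{th:RmtoRn}. Where you genuinely diverge is the extraction of the square root. The paper writes $\Phi'=a+b\mathrm{i}$ and uses the algebraic identity $a+b\mathrm{i}=\tfrac{1}{2(a+\lambda)}\bigl((a+\lambda)+b\mathrm{i}\bigr)^2$ to obtain $\Phi'=f h^2$ with $f\in\mathbb R(x,y)$ square-free and $h\in\mathbb C(x,y)$, then passes to the variables $z,\bar z$ to conclude that $f$ and $h$ are holomorphic and $f$ is a constant. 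You instead propose to show that the rationality of $\lambda$ forces every zero and pole of $\Phi'$ to have even order, after which a global rational square root exists because $\mathbb C$ is algebraically closed. That route is viable and arguably more transparent, but the justification you give for the parity claim would fail as written: the total order of vanishing of the real function $\lambda^2$ at $z_0$ cannot detect the parity you need (for instance $x^2+y^2$ vanishes to order $2$ at the origin and is still not a square in $\mathbb R(x,y)$), and ``$\lambda/|\Psi_0|$ is unimodular rational'' does not parse since $|\Psi_0|$ is not rational (you presumably mean $\lambda/|\Psi_0|^2=\pm1$). The correct invariant is the multiplicity of the irreducible $z-z_0$ in the unique factorization domain $\mathbb C[z,\bar z]\cong\mathbb C[x,y]$: writing $|\Phi'|^2=\Phi'(z)\,\overline{\Phi'}(\bar z)$, the second factor contributes only powers of the irreducibles $\bar z-\bar a_i$, so the multiplicity of $z-z_0$ in $\lambda^2$ equals the order $k$ of the zero (or pole) of $\Phi'$ at $z_0$; since $\lambda$ is a ratio of real polynomials, every irreducible occurs in $\lambda^2$ with even multiplicity, hence $k$ is even. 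With that one-line repair the gap you yourself flagged closes, and your argument becomes a complete alternative to the paper's explicit factorization.
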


\begin{proof}
By Theorem~\ref{th:RmtoRn}, $\Phi$ is PH-preserving if and only if its first fundamental form satisfies
\(
G=\lambda^2 I_2
\)
with $\lambda \in \mathbb{R}(t)$. This implies conformality of $\Phi$; hence, up to composing with complex conjugation, $\Phi$ is holomorphic and $\lambda=|\Phi'|$. 
Writing  $\Phi'=a+ b \mathrm{i}$ provides $a^2+b^2=\lambda^2$ which allows the following extraction of the square expression
    \begin{equation}
        a+b \mathrm{i}=\frac{1}{2(a+\lambda)}\left ((a+\lambda)+b\mathrm{i} \right )^2.
\end{equation}
Let $\mathbb R(x,y)$ and $\mathbb C(x,y)$ denote the fields of real, respectively complex, rational functions in the variables $x$ and $y$.
Now, $1/2(a+\lambda)$ is a real rational function of two variables $x$, $y$ and as such it can be factorized as a product of real rational functions $f g^2$ with $f$ square-free. Defining $h=g\left ((a+\lambda)+b\mathrm{i} \right )$ we obtain 
   \begin{equation}\label{eq:fh2}
       \Phi'=a+b \mathrm{i}=fh^2.
       \end{equation}
where $f\in \mathbb R(x,y)$ is square-free and $h\in \mathbb C(x,y)$. After a linear change of variables $x=\frac{1}{2}(z+\bar{z})$, $y=\frac{1}{2\mathrm{i}}(z-\bar{z})$ the left hand side of \eqref{eq:fh2} contains only $z$ because it is holomorphic. Consequently the factors of the right hand side $fh^2$ cannot contain $\bar{z}$ because there is no way to cancel out such factors. Indeed, $f$ is square free and $h^2$ contains only squares. This means that $f$ and $h$ are both holomorphic. Moreover $f$ is real and thus constant. As a result $\Psi\coloneqq\sqrt{f}\,h$ is a complex rational (meromorphic) function and $\Phi'=\Psi^2$.
\end{proof}

A further special case of interest is $m = n \ge 3$. Here, the class of PH-preserving maps is considerably smaller:

\begin{corollary}
[Rigidity for $m=n\ge3$]
\label{th:rigidity}
If $n\ge3$ and $\Phi\colon\mathbb R^n\to\mathbb R^n$ is PH-preserving, then it is a Möbius transformation. Conversely, all Möbius transformations are PH-preserving.
\end{corollary}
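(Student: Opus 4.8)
The plan is to obtain both implications from Theorem~\ref{th:RmtoRn}, with the classical Liouville rigidity theorem carrying the hard (forward) direction. First I would assume $\Phi\colon\mathbb{R}^n\to\mathbb{R}^n$ is PH-preserving and apply Theorem~\ref{th:RmtoRn}: this at once yields that $\Phi$ is rational with first fundamental form $G=\lambda^2 I_n$ for a non-zero real rational function $\lambda$, so $\Phi$ is conformal wherever it is regular. Being rational, $\Phi$ is real-analytic on its domain of definition, and the locus $U\subseteq\mathbb{R}^n$ where in addition $\rank D\Phi=n$ is open, dense and non-empty. On each connected component of $U$ the map $\Phi$ is thus a smooth conformal local diffeomorphism of a domain in $\mathbb{R}^n$ with $n\ge3$, and Liouville's theorem (as in~\cite{Ahlfors1986}) identifies it there with the restriction of a Möbius transformation $\mu$ of $\widehat{\mathbb{R}}^n$. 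Since two rational maps that agree on a non-empty open set agree identically, I would conclude $\Phi=\mu$.

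For the converse I would verify that every Möbius transformation meets the hypotheses of Theorem~\ref{th:RmtoRn}. Write $\mu$ as a finite composition of Euclidean isometries $x\mapsto Ax+b$ with $A\in O(n)$, homotheties $x\mapsto cx$ with $c\ne0$, and the inversion $\iota\colon x\mapsto x/\norm{x}^2$. Each factor is rational, and each has first fundamental form a rational scalar multiple of $I_n$: it is $I_n$ for an isometry, $c^2 I_n$ for a homothety, and $\norm{x}^{-4}I_n$ for $\iota$, since $D\iota(x)=\norm{x}^{-2}\bigl(I_n-2\,xx^\top/\norm{x}^2\bigr)$ and the matrix factor here is an orthogonal reflection. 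Because the first fundamental form of a composition $\Phi_2\circ\Phi_1$ equals $\lambda_2(\Phi_1(u))^2\,\lambda_1(u)^2\,I_n$ whenever both factors satisfy~\eqref{eq:scaled-ph-preserving}, an induction on the number of factors shows that $\mu$ is rational with $G=\lambda^2 I_n$ for some rational $\lambda$; the sufficiency part of Theorem~\ref{th:RmtoRn} then gives that $\mu$ is PH-preserving.

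I expect the only real obstacle to be making the use of Liouville's theorem airtight. One must check that $\Phi$ is genuinely regular on a non-empty open set, so that it is a conformal diffeomorphism there rather than conformal only in a degenerate sense, and that a possibly disconnected regular locus causes no harm---both points are handled by the rigidity of rational maps, since agreement on any one component propagates globally. The rest is bookkeeping: conformality already comes packaged inside Theorem~\ref{th:RmtoRn}, and the converse reduces to the elementary first-fundamental-form computation for an inversion.
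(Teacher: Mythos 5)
Your proposal is correct and follows essentially the same route as the paper: Theorem~\ref{th:RmtoRn} plus Liouville's theorem for the forward direction, and a generator-by-generator verification (isometries, homotheties, inversion) for the converse. The only cosmetic difference is that you close the converse under composition via the chain-rule identity for the first fundamental form, whereas the paper simply observes that PH-preserving maps compose at the level of curves; both work, and your extra care about the regular locus and rational rigidity in the Liouville step is a reasonable (if not strictly necessary) refinement.
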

\begin{proof}
Assume first that $\Phi$ is PH-preserving.  
By Theorem~\ref{th:RmtoRn} with $m=n$, the map $\Phi$ is rational, regular, and its first fundamental form satisfies
\[
G(u) = \lambda(u)^2 I_n
\]
for some real rational function $\lambda>0$. In particular, $\Phi$ is (anti-)conformal on its regular set. Liouville’s classical theorem in dimension $n\ge3$ now implies that any $C^1$ conformal map between domains in $\mathbb R^n$ is the restriction of a Möbius transformation of the one-point compactification $\widehat{\mathbb R}^n\coloneqq\mathbb R^n\cup\{\infty\}$; see for instance \cite[Chap.~5]{IwaniecMartin2001} or \cite{Ahlfors1986}. Hence $\Phi$ is (the restriction of) a Möbius transformation.

For the converse, recall that the Möbius group of $\widehat{\mathbb R}^n$ is generated by Euclidean similarities (translations, orthogonal transformations, homotheties) and inversions in spheres; cf.~\cite[Sec.~1]{Ahlfors1986}. Translations, rotations and homotheties have constant differential, so
\(
G(u) = \lambda^2 I_n
\)
with $\lambda>0$ constant, and these maps are PH-preserving by Theorem~\ref{th:RmtoRn}.

For an inversion in the sphere $S(c,\varrho)$ with center $c$ and radius $\varrho$,
\[
I_{c,\varrho}(x) = c + \varrho^2\,\frac{x-c}{\|x-c\|^2}, \qquad x\neq c,
\]
a direct computation gives
\[
DI_{c,\varrho}(x)
  = \frac{\varrho^2}{\|y\|^2}\Bigl(I_n - 2\,\frac{y\otimes y}{\|y\|^2}\Bigr),
  \qquad y = x-c.
\]
The matrix in parentheses is the orthogonal reflection across the hyperplane orthogonal to $y$, hence
\[
(DI_{c,\varrho}(x))^{\top}DI_{c,\varrho}(x)
  = \frac{\varrho^4}{\|x-c\|^4}\,I_n.
\]
Thus $I_{c,\varrho}$ is conformal with dilation $\lambda(x)=\varrho^2/\|x-c\|^2$, so it is PH-preserving again by Theorem~\ref{th:RmtoRn}.

The class of PH-preserving maps is clearly closed under composition: if $\Phi_1$ and $\Phi_2$ are PH-preserving, then for any PH curve $r$ the curve $\Phi_2\circ r$ is PH, and hence $\Phi_1\circ(\Phi_2\circ r)$ is PH as well. Since any Möbius transformation is a finite composition of similarities and inversions, every Möbius transformation is PH-preserving.
\end{proof}

\begin{remark}[Quaternionic viewpoint]
Identifying $\mathbb R^3$ with the imaginary quaternions $\operatorname{Im}\mathbb H$,
any Möbius map can be realized as the restriction to $\operatorname{Im}\mathbb H$ of a quaternionic fractional linear map
\[
q\longmapsto (a_1 q + b_1)(c_1 q + d_1)^{-1},
\qquad a_1,b_1,c_1,d_1\in\mathbb H,
\]
belonging to the Poincaré group considered in~\cite{Ahlfors1986,BisiGentili0805,JakobsKrieg2010}.
In this representation the PH-preserving property is encoded by the conformality of
quaternionic Möbius transformations.
\end{remark}

\section{Examples and applications}
\label{sec:examples}

In this section we describe some examples and counterexamples and we show a general construction that produces PH-preserving maps from a PH-curve. We start with two examples, followed by a couple of
counterexamples and by further examples.

\begin{example}
Take $\Psi(z)=z$. Then $\Psi^2=z^2$ has no simple poles, so $\Phi(z)=\int z^2 \dif z=\tfrac{1}{3}z^3$ is entire and, as a map from $\mathbb{C} \to \mathbb{C}$, is PH-preserving. The first fundamental form is $G=|\Phi'(z)|^2 I_2=|z|^4 I_2$, with dilation $\lambda(z)=|z|^2$.
\end{example}

\begin{example}
\label{ex:complex}
Let $\Psi(z)=\frac{z^2+1}{z}$. Then $\Psi^2=\frac{(z^2+1)^2}{z^2}$ has only a pole of multiplicity two at $z=0$, so all residues vanish. 
A partial fraction decomposition gives a rational primitive. Up to a constant we have:
\[
\Phi(z)=\int \frac{(z^2+1)^2}{z^2}\dif z=\int\!\left(z^2+2+\frac{1}{z^2}\right)\dif z=\frac{z^3}{3}+2z-\frac{1}{z}.
\]
Thus $\Phi$ is a rational PH-preserving map from $\mathbb{C} \to \mathbb{C}$ (Figure~\ref{fig:complex}). The anti-holomorphic counterpart $\overline{\Phi(\bar z)}$ is PH-preserving as well.
	\begin{figure}[h]
		\centering
		\begin{picture}(0.6\textwidth,200)
			\put(0,10){\includegraphics[scale=0.23]{"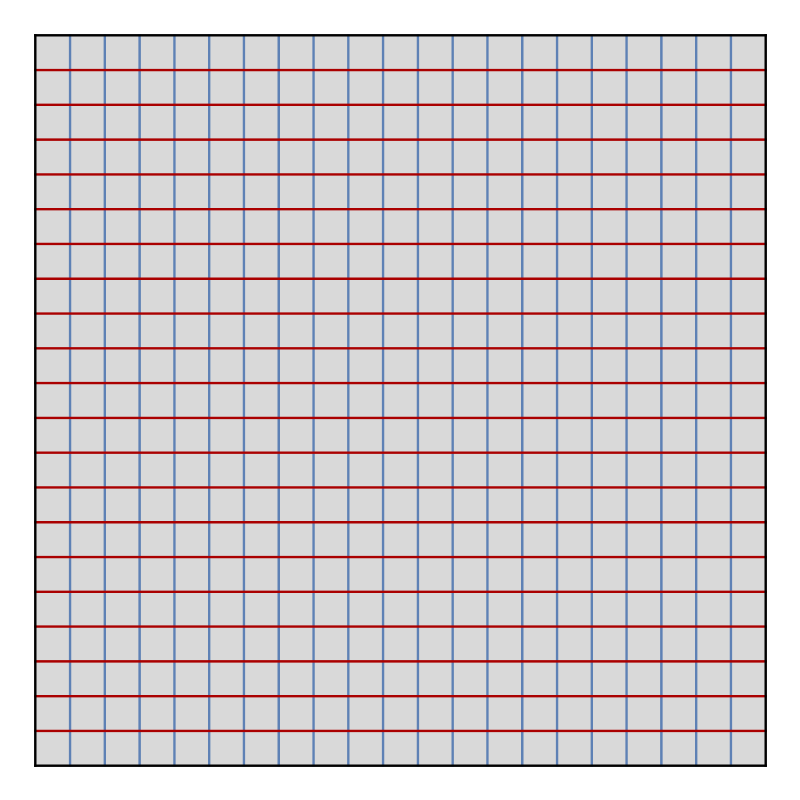"}}
			\put(150,80){$\longrightarrow$}
            \put(155,85){$\Phi$}
			\put(180,5){\includegraphics[scale=0.25]{"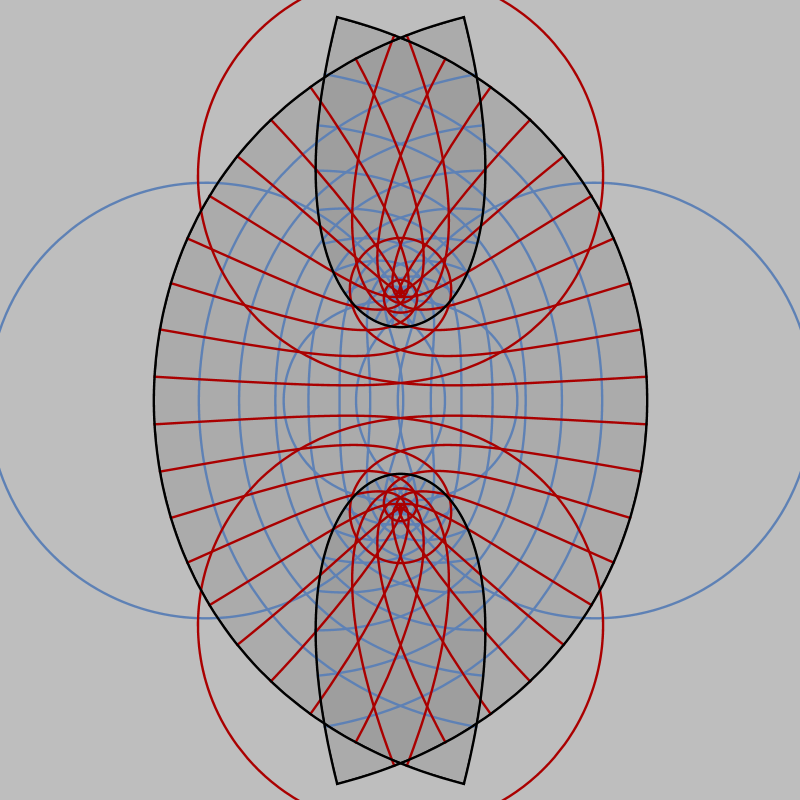"}}
		\end{picture}
		\caption{The PH-preserving map of Example~\ref{ex:complex}. All displayed curves are PH.}
		\label{fig:complex}
	\end{figure}

\end{example}

\begin{example}
Consider the complex holomorphic function $\Phi\colon\mathbb{C}\to\mathbb{C}$ given by 
$\Phi(z)=z^2+z$, i.e., $\Phi(x+iy)=(x^2-y^2+x)+i(2xy+y)$.
Since $\Phi$ is holomorphic, the first fundamental form equals $\bigl(\begin{smallmatrix} E & 0 \\ 0 & E \end{smallmatrix}\bigr)$ with  $E=\|\Phi_x\|^2=(2x+1)^2+(2y)^2$. But this is not a square, whence $\Phi$ is not PH-preserving. Indeed, if we consider the PH line $r(t)=(3t,4t)$, then
\[
(\Phi\circ r)(t)=(-7t^2+3t,\,24t^2+4t),\qquad
\|\!(\Phi\circ r)'\!(t)\|^2=25t^2(25t^2+6t+1),
\]
which is not a square in~$\mathbb{R}(t)$.
\end{example}


\begin{example}
A simple non-conformal example that fails to preserve PH curves is $\Phi(u_1,u_2)=(u_1^2,u_2)$, for which the first fundamental form $G(u)=\bigl(\begin{smallmatrix}4u_1^2&0\\0&1\end{smallmatrix}\bigr)$ is not of the shape $\lambda^2 I_2$. Taking $r(t)=(t,t)$ gives $\|(\Phi\circ r)'\|^2=4t^2+1$---not a square.
\end{example}

\begin{example}\label{giveCut}
In \cite[Theorem 8]{LEE2014689} the authors prove in a rather lengthy technical way that for any $c \in \mathbb C$ the complex variable function $\sum_{i=-1}^3 a_i(t-c)^i$ is a complex representation of a planar PH curve if and only if 
\begin{equation}\label{eq:cut31}
( a_2^2-3 a_1 a_3=0 \text{ and }  a_{-1}=0)\quad \text{or}\quad  ( a_1^2+12 a_3 a_{-1}=0 \text{ and }  a_{2}=0).
\end{equation}   
This follows easily from Corollary \ref{cor:R2toR2}: We define the holomorphic mapping $\Phi(z)=\int \Psi^2\dif z$, where $\Psi=b_{-1}(z-c)^{-1}+b_0+b_{1}(z-c)$, with coefficients $b_i \in \mathbb C$. To make this mapping PH-preserving, we need the integral to be rational, which is equivalent to the vanishing of the residue of $\Psi^2$ at $t=c$. 
Expanding $\Psi^2$ gives
\[
\Psi^2
=
b_{-1}^2(z-c)^{-2}
+2b_{-1}b_0(z-c)^{-1}
+\bigl(b_0^2+2b_{-1}b_1\bigr)
+2b_0b_1(z-c)
+b_1^2(z-c)^2.
\]
The condition for vanishing residue of $\Psi^2$ at $z=c$ is \begin{equation}\label{bilinear1}
     \operatorname{Res}(\Psi^2,c)=2b_{-1}b_0=0
    \end{equation} which is equivalent to
$b_{-1}=0$ or $b_0=0$. Under this condition the logarithmic term in the primitive
disappears, and integration gives
\[
\Phi(z)
=
-\frac{b_{-1}^2}{z-c}
+\bigl(b_0^2+2b_{-1}b_1\bigr)(z-c)
+b_0b_1(z-c)^2
+\frac{b_1^2}{3}(z-c)^3 .
\]
Consequently, writing
\(\Phi(z)=\sum_{i=-1}^{3}a_i(z-c)^i\),
we have
\[
a_{-1}=-b_{-1}^2,\qquad
a_1=b_0^2+2b_{-1}b_1,\qquad
a_2=b_0b_1,\qquad
a_3=\frac{b_1^2}{3}.
\]
If $b_{-1}=0$, then $a_{-1}=0$ and
\[
a_2^2-3a_1a_3=0.
\]
If $b_0=0$, then $a_2=0$ and
\[
a_1^2+12a_3a_{-1}=0.
\]
This yields exactly the two cases in \eqref{eq:cut31}. Replacing the complex variable $z$
with the real parameter $t$ we obtain precisely the rational PH curves constructed in
\cite[Theorem~8]{LEE2014689}.
\end{example}

\begin{example}
\begin{figure}
    \centering
    \includegraphics[width=12cm]{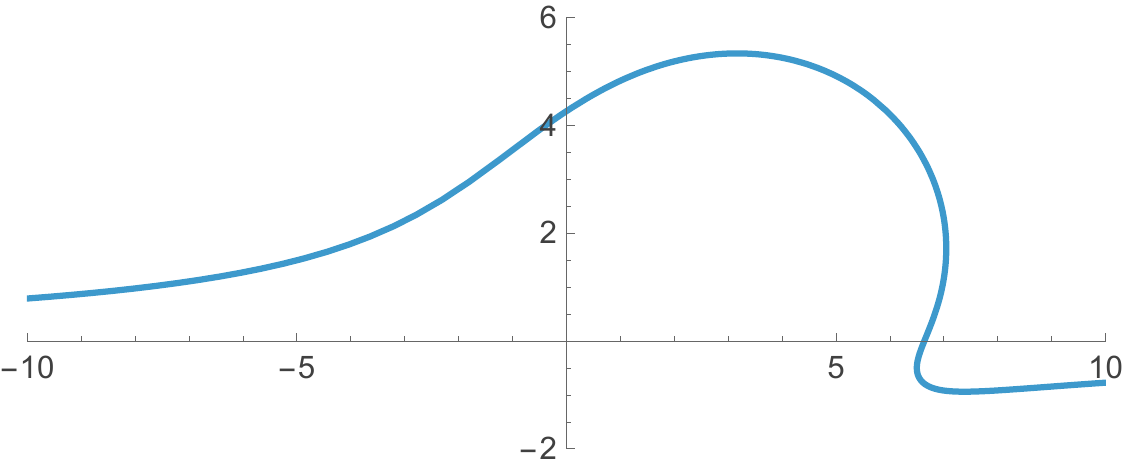}
    \caption{Rational PH quintic curve plotted for $t\in[-10,10].$}
    \label{PH5}
\end{figure}

\label{giveCut2}
Before presenting a general construction we will give one more simple example. Let us choose two complex numbers at which we will construct the poles, and consider the holomorphic function
\[\Psi(z)=1+\frac{a_1}{z-c_1}+\frac{a_2}{z-c_2}.\] 
  The system of equations that encode the vanishing of the residues of $\Psi(z)^2$ at $z = c_1$ and $z = c_2$ is
  \begin{equation}\label{bilinear2}
    \operatorname{Res}(\Psi^2,c_1)=
    2 a_1 (a_2+c_1-c_2 )=0,\quad \operatorname{Res}(\Psi^2,c_2)=
    2 a_2 (-a_1+c_1-c_2 )=0.
  \end{equation}
  Assuming that $a_1\neq 0$ and $a_2\neq 0$ we obtain  
  $a_1=c_1-c_2$, $a_2=c_2-c_1$ consequently providing the PH-preserving mapping 
  \[\Phi=\int \Psi^2\dif z=z-\frac{(c_1-c_2)^2}{z-c_1}-\frac{(c_1-c_2)^2}{z-c_2}.\]
  For the particular choice $c_1= 2+3\mathrm{i}$ and $c_2= 1+\mathrm{i}$ we get  
  \[\Phi=\int \Psi^2\dif z=z+\frac{3-4\mathrm{i}}{z-(2+3\mathrm{i})}+\frac{3-4\mathrm{i}}{z-(1+\mathrm{i})}\]
  and setting $z=t + 0\mathrm{i}$ we obtain a planar rational quintic PH curve, which has the real form 
  \begin{equation}
  \label{eq:ct}
  c(t)=\left(\frac{t^5-6 t^4+29 t^3-45 t^2+55 t+25}{\left(t^2-4
   t+13\right) \left(t^2-2 t+2\right)},\frac{-8 t^3+48 t^2-122
   t+125}{\left(t^2-4 t+13\right) \left(t^2-2
     t+2\right)}\right),
\end{equation}
see Fig.~\ref{PH5}.
\end{example}

\begin{remark}
Examples \ref{giveCut} and \ref{giveCut2} motivate a general construction of planar PH-preserving maps with poles of prescribed order at prescribed points. The workflow of this construction is summarized in the following algorithm. 
\begin{algorithm}
  \begin{algorithmic}[1]
    \Require Poles $c_1$, $c_2$, \ldots, $c_n \in \mathbb{C}$ of respective orders $k_1$, $k_2$, \ldots, $k_n$, degree $p$ of polynomial part
    \Ensure Rational PH-preserving map $\Phi \in \mathbb{C}[z]$ with pole of order at most $2k_i-1$ at $c_i$ for $i \in \{1,2,\ldots,k\}$
    \State Make the ansatz $\Psi = \sum_{i=1}^n \sum_{j=1}^{k_i}\frac{a_{ij}}{(z-c_i)^j}+ \sum_{i=0}^p p_iz^i$. The coefficients $a_{ij}$ and $p_i$ are yet to be determined.
    \State Solve the system of algebraic equations $\{ \operatorname{Res}(\Psi^2,c_i) = 0 \mid i=1,2,\ldots,n \}$ for $a_{ij}$ and $p_i$. This will typically result in several (families of) solutions.
    \State Pick one (family of) solution $\Phi$ and compute $\Phi = \int \Psi^2 \dif z$ via partial fraction decomposition.
  \end{algorithmic}
\end{algorithm}
Some remarks on the algorithm  are necessary:
  \begin{enumerate}
  \item The system of algebraic equations in Line~2 of the algorithm is \emph{bilinear}, see its particular form at \eqref{bilinear1} and \eqref{bilinear2}. Systems of bilinear algebraic equations are accessible to a special solution theory, c.f. for example \cite{Johnson2009}. Typically, as it is the case in our examples, solutions where poles $c_i$ vanish will appear as well.
 \item The poles $c_i$ and their orders $k_i$ are design variables for above algorithm but it is yet unclear how to efficiently use them (similar observations have been made in \cite{schroecker25:_closed_bounded}). Example~\ref{giveCut} suggest that poles and their orders are related to the weights of the rational curve $c(t)$ in \eqref{eq:ct}.
 \item The whole workflow is designed to avoid dealing with the multivalued complex logarithm and with branching issues. The latter only arise if one starts from a prescribed derivative $\Phi'$ and tries to choose a global meromorphic square root $\Psi$ with $\Phi'=\Psi^2$. The former may arise in case of numeric input or numeric solution to the equation system whence the partial fraction decomposition of $\Psi^2$ in Line~3 of the algorithm will show small residues at $c_1$, $c_2$, \ldots, $c_n$. It is recommendable to set these residues to $0$ before the final integration step.
\end{enumerate}
\end{remark}

\begin{example}\label{ex:Sufr}
	In \cite{Perez_Fernandez_2023}, it was shown that a conformal polynomial parametrization $\mathbb{R}^2\rightarrow\mathbb{R}^3$ is necessarily harmonic, whence minimal. As an immediate consequence, we see that in this dimension all polynomial PH-preserving maps are minimal surfaces with a polynomial square on the diagonal of its first fundamental form. In contrast, rational conformal parametrizations are not yet classified. A simple way to obtain such a  parametrization is to apply, e.g., a~spherical inversion to a polynomial one. The simplest example is then a conformal parameterization of the sphere
	\begin{equation}
		\left(\frac{2u}{1+u^2+v^2},\frac{2u}{1+u^2+v^2},\frac{1-u^2-v^2}{1+u^2+v^2}\right),
	\end{equation}
	which is the image of the~plane under a~ spherical inversion.  It is easy to~verify that this map is not only conformal but also PH-preserving.	In addition, in \cite{Fernandez12}, the author constructed a family of conformal maps given by
	\begin{equation}\label{eq: family of conformal rational maps}
		\frac{2}{3(1+u^2+v^2)}
		\left(
		\begin{array}{c}
			-6  \left(u^2-v^2\right) \lambda_1\lambda_2+3\left(u^4+2 u^2 v^2+u^2+v^4+v^2+1\right)\lambda_1 ^2+3\lambda_2^2 \\
			v \left(v^2-3 u^2\right) \left(u^2+v^2+4\right)\lambda_1^2 -6v \left(u^2+v^2\right)\lambda_1\lambda_2-3 v\lambda_2^2 \\
			-u \left(u^2-3 v^2\right) \left(u^2+v^2+4\right)\lambda_1 ^2-6u \left(u^2+v^2\right)\lambda_1\lambda_2+3u\lambda_2 ^2 \\
		\end{array}
		\right),
	\end{equation}
	where $\lambda_i$ are parameters. The family is homogeneous in $\lambda_i$, i.e., it describes in fact a~one-dimensional set of surfaces up to scaling.  For $\lambda_1/\lambda_2 = 0$ the surface becomes a sphere.  For all other parameter values, the surface is neither minimal nor a conformal image of a~minimal surface. Examples corresponding to different values of $\lambda_1/\lambda_2$ are shown in Fig.~\ref{fig:ph preserving surfaces}. The first fundamental form of the family is
	\begin{equation}
		G_{(\lambda_1,\lambda_2)}(u,v) = 
		2\left(\frac{\left(u^2+v^2\right) \left(u^2+v^2+4\right)\lambda_1^2+2(u^2-v^2)\lambda_1\lambda_2 +\lambda_2^2}{u^2+v^2+1}\right)^2 
		\begin{pmatrix}
			1&0\\
			0&1
			\end{pmatrix},	\end{equation}
	which shows that all maps  are rational PH-preserving mappings. 
	\begin{figure}
		\centering
		\begin{picture}(0.9\textwidth,100)
			\put(0,20){\includegraphics[scale=0.15]{"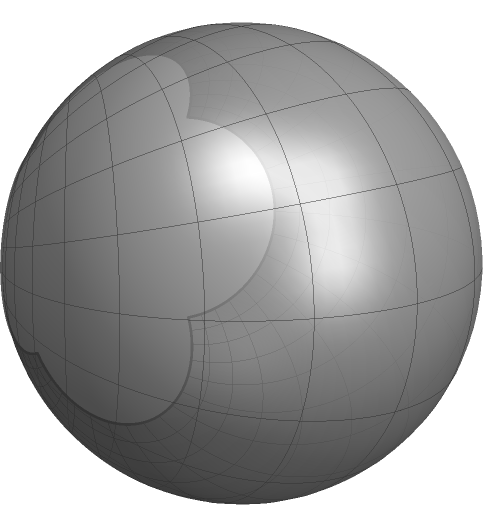"}}
			\put(5,0){\small $\lambda_1/\lambda_2=0$}
			\put(70,20){\includegraphics[scale=0.27]{"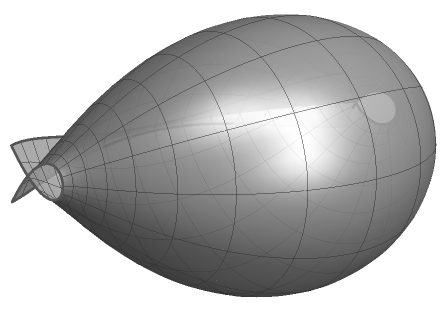"}}
			\put(90,0){\small$\lambda_1/\lambda_2=1/9$}
			\put(175,20){\includegraphics[scale=0.31]{"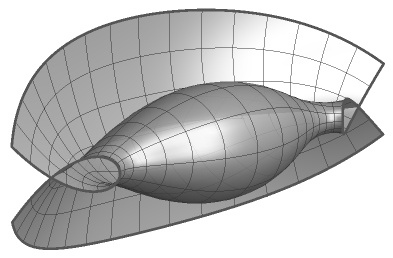"}}
			\put(195,0){\small$\lambda_1/\lambda_2=1/4$}
			\put(280,20){\includegraphics[scale=0.23]{"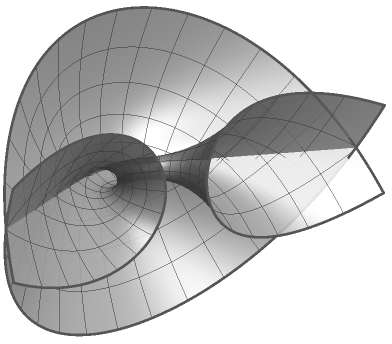"}}
			\put(285,0){\small $\lambda_1/\lambda_2=2/3$}
			\put(360,20){\includegraphics[scale=0.18]{"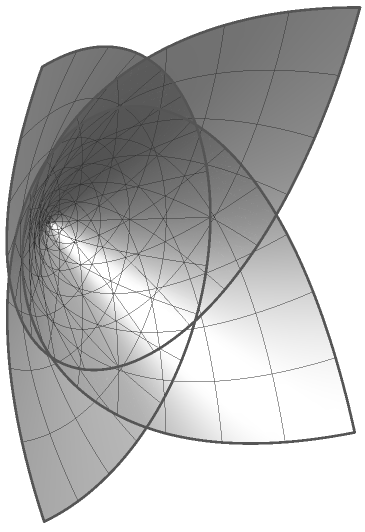"}}
			\put(360,0){\small $\lambda_1/\lambda_2=\infty$}
		\end{picture}
		\caption{\label{fig:ph preserving surfaces}Images of square $[-2,2]\times[-2,2]$ under maps \eqref{eq: family of conformal rational maps} for several $\lambda_1/\lambda_2$. The surface on the far left is a sphere, while the one on the far right is a  multiply covered surface of revolution.}
	\end{figure}
\end{example}

\section{Proof of the converse implication in Theorem~\ref{th:RmtoRn}}
\label{sec:proofs}

This section proves the converse implication in  Theorem~\ref{th:RmtoRn}. 
The simple case $m=1$ and the fundamental case $m = 2$ are dealt with in Section~\ref{sec:planar}. The proof of the latter relies on Gauss' Lemma plus some additional technical results. It turns out that PH-preservation on lines and one PH curve (the Tschirnhaus cubic) is enough to imply PH-preservation in general. The spatial case $m \ge 3$ (Section~\ref{sec:higher-dimensional}) is handled by
slicing the space into $2$-planes, thus reducing the problem to the 2D case.

\subsection{The Case \texorpdfstring{$m \le 2$}{m<=2}}
\label{sec:planar}

In this section we prove the converse implication in Theorem~\ref{th:RmtoRn} for the case $m \le 2$.

For the sake of completeness consider first a PH-preserving mapping $\Phi\colon \mathbb{R}^1 \to \mathbb{R}^n$. Applying $\Phi$ to the trivial PH curve $r\colon \mathbb{R}^1 \to \mathbb{R}^1$, $r(t) = t$ we must obtain another PH curve $q$ whose squared norm is the square of a $\sigma \in \mathbb{R}(t)$. The first fundamental form $G$ is a matrix of dimension $1\times1$ and  we have
\[
  G = \langle D\Phi, D\Phi \rangle =
  \langle D\Phi \circ r, D\Phi \circ r \rangle =
  \langle q', q' \rangle = \Vert q' \Vert^2 = \sigma^2
\]
which proves the converse implication in Theorem~\ref{th:RmtoRn} for $m=1$.

We now prepare some tools from algebra that are necessary for the proof of the case $m=2$.

Consider the two unique factorization domains $\mathbb{R}[y]$ and
$\mathbb{R}(y)$ (its field of fractions), where $y$ is a set of indeterminates. In order to lighten the notation we write
\[A \coloneqq \mathbb R[y],\qquad K \coloneqq \mathbb R(y)\]
throughout this section. Polynomials in the  variables $x=(x_1,\dots,x_n)$ with coefficients in $A$ form $A[x]$; similarly $K[x]$ denotes polynomials in $x$ with coefficients in $K$.

\begin{definition}\label{def:content}
For $F(x,y)=\sum_\alpha a_\alpha(y)\,x^\alpha\in A[x]$, the \emph{content with respect to $y$} is
\[
\operatorname{cont}_y(F) \coloneqq \gcd\nolimits_A\{a_\alpha(y)\}\in A
\]
defined up to multiplication by a unit of $A$. We say that $F$ is \emph{primitive with respect to $y$} if $\operatorname{cont}_y(F)=1$.
\end{definition}

Here and below a unit of $A=\mathbb R[y]$ means an invertible element of $A$, equivalently a non-zero real constant. Every $F\in A[x]$ decomposes, uniquely up to multiplication by a unit in $A$, as
\[
F=\operatorname{cont}_y(F)\cdot \operatorname{pp}_y(F),
\]
where $\operatorname{pp}_y(F)\in A[x]$ is primitive and is called the \emph{primitive part}.

\begin{lemma}\label{lem:gauss-K-to-A}
Let $F\in A[x]$ be primitive and let $G\in K[x]$ satisfy $F\cdot G\in A[x]$. Then $G\in A[x]$.
\end{lemma}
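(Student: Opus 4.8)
This is the classical Gauss-type statement that a primitive polynomial cannot acquire a nontrivial denominator when multiplied into $A[x]$; the plan is to reduce it to primitivity of products. Write $G = \frac{1}{d}\,\widetilde{G}$ with $d \in A$ and $\widetilde{G} \in A[x]$, where we may take $d$ to be a common denominator of the (finitely many) coefficients of $G$ in $K = \mathbb{R}(y)$, cleared so that $d$ is a polynomial and $\gcd_A$ of the coefficients of $\widetilde{G}$ together with $d$ is $1$ (equivalently, $\operatorname{cont}_y(\widetilde G)$ and $d$ share no common factor). The hypothesis $F \cdot G \in A[x]$ then says $d \mid F \cdot \widetilde{G}$ in $A[x]$, i.e. every coefficient of $F\widetilde{G}$ is divisible by $d$ in $A$, which means $d \mid \operatorname{cont}_y(F\widetilde{G})$.

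The key step is Gauss's Lemma in the form $\operatorname{cont}_y(F\widetilde{G}) = \operatorname{cont}_y(F)\cdot\operatorname{cont}_y(\widetilde{G})$ up to a unit in $A$. Since $A = \mathbb{R}[y]$ is a UFD, $A[x]$ is a UFD and this holds; concretely, the product of two primitive polynomials is primitive, so $\operatorname{pp}_y(F\widetilde G)=\operatorname{pp}_y(F)\operatorname{pp}_y(\widetilde G)$ up to units and contents multiply. By hypothesis $F$ is primitive, so $\operatorname{cont}_y(F\widetilde{G}) = \operatorname{cont}_y(\widetilde{G})$ up to a unit. Combining with the previous paragraph, $d \mid \operatorname{cont}_y(\widetilde{G})$ in $A$. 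But we arranged $d$ and $\operatorname{cont}_y(\widetilde{G})$ to be coprime in $A$, so $d$ is a unit in $A$, i.e. a nonzero constant. Therefore $G = \frac{1}{d}\widetilde{G} \in A[x]$, as claimed.

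The only point requiring care — and what I expect to be the main (minor) obstacle — is the bookkeeping in the reduction to a coprime representation $G = \widetilde{G}/d$: one must first clear denominators to get \emph{some} representation with $d \in A$ and $\widetilde{G}\in A[x]$, then divide out $\gcd_A(d, \operatorname{cont}_y(\widetilde{G}))$ from both $d$ and $\widetilde{G}$ simultaneously so that in the final representation $d$ is coprime to $\operatorname{cont}_y(\widetilde{G})$. After that the argument is a direct application of Gauss's Lemma and nothing else is needed. (If one prefers, this is immediate from the fact that $A[x]$ is a UFD with fraction field $K(x)$: $F$ primitive and $FG \in A[x]$ forces $G \in A[x]$ because no irreducible of $A$ divides $F$, hence every such irreducible appearing in the denominator of $G$ would have to divide $FG$ while not dividing $F$, contradiction.)
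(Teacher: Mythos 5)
Your proof is correct and follows essentially the same route as the paper's: both reduce to a coprime fraction representation of $G$ and invoke Gauss's Lemma (primitivity of products / multiplicativity of content over the UFD $A=\mathbb{R}[y]$) to force the denominator to divide the content of the numerator, hence to be a unit. The paper packages the numerator as $a\,G_0$ with $G_0$ primitive rather than as $\widetilde G$ with content coprime to $d$, but this is only a difference in bookkeeping.
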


\begin{proof}
Write $G=\frac{a}{b}\,G_0$ with $a,b\in A$ coprime and $G_0\in A[x]$ primitive. Since $F$ and $G_0$ are primitive, so is $FG_0$ by the famous Gauss Lemma \cite[p.~152]{jacobson1985}, whence $\operatorname{cont}_y(FG_0)=1$.
Now $FG=\frac{a}{b}\,(FG_0)\in A[x]$ implies that $b$ divides the content of $a(FG_0)$, which equals $a$. Hence $b$ divides $a$ in $A$. By coprimality, $b$ is a unit, so $G\in A[x]$.
\end{proof}

\begin{lemma}\label{lem:square-extraction}
Let $f(x,y)\in \mathbb R[x,y]$. Assume that for \emph{every} $y_0\in\mathbb R$ the specialized polynomial
$f(\,\cdot\,,y_0)\in\mathbb R[x]$ is a square in $\mathbb R[x]$. Then there exist $g(y)\in \mathbb R[y]$ and
$h(x,y)\in \mathbb R[x,y]$ such that
\[
f(x,y)=g(y)\,h(x,y)^2 .
\]
\end{lemma}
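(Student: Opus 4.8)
The plan is to combine a factorization of $f$ over the polynomial ring $\mathbb R[y]$ (viewed as coefficient ring for polynomials in $x$) with the pointwise square hypothesis, used to control parities of multiplicities. First I would pass to $K=\mathbb R(y)$ and factor $f$ in the UFD $K[x]$ as $f = c\cdot\prod_i p_i^{e_i}$ with $c\in K^\times$ and $p_i\in K[x]$ distinct monic irreducibles. Clearing denominators, each $p_i$ can be taken primitive in $A[x]=\mathbb R[y][x]$ (Gauss's lemma, as already used in Lemma~\ref{lem:gauss-K-to-A}), and then $f = g_0(y)\cdot\prod_i P_i^{e_i}$ in $A[x]$ for some $g_0\in A$ and primitive $P_i\in A[x]$, where the $P_i$ are pairwise non-associate primitive irreducibles. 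The goal is to show every $e_i$ is even; then $h \coloneqq \prod_i P_i^{e_i/2}$ and $g \coloneqq g_0$ (after absorbing a possible sign/constant) work, noting that the hypothesis forces the leading behaviour of $g_0$ at generic $y_0$ to itself be a square times the square-correction, which I address at the end.

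The key step is a genericity/specialization argument to pin down the parity of each $e_i$. For all but finitely many $y_0\in\mathbb R$, specialization $y\mapsto y_0$ commutes with this factorization: $\deg_x P_i$ is preserved, the $P_i(\cdot,y_0)$ remain squarefree and pairwise coprime (the discriminant of $\prod P_i$ and the resultants $\operatorname{Res}(P_i,P_j)$ are nonzero elements of $A$, hence nonzero at generic $y_0$), and $g_0(y_0)\neq 0$. Fix such a generic $y_0$. By hypothesis $f(\cdot,y_0)=s(x)^2$ for some $s\in\mathbb R[x]$. Comparing the two factorizations of $f(\cdot,y_0)$ in the UFD $\mathbb R[x]$: the irreducible factors of $s^2$ all occur to even multiplicity, while $f(\cdot,y_0)=g_0(y_0)\prod_i P_i(\cdot,y_0)^{e_i}$ exhibits the distinct irreducible (up to further factorization and up to constants) pieces $P_i(\cdot,y_0)$ to multiplicity exactly $e_i$ — here I must be slightly careful that $P_i(\cdot,y_0)$, though squarefree, need not be irreducible in $\mathbb R[x]$; however its irreducible factors are distinct and, by coprimality of the $P_i(\cdot,y_0)$, do not recur across different indices, so each such irreducible factor appears in $f(\cdot,y_0)$ with multiplicity exactly $e_i$, forcing $e_i$ even.

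Having established that all $e_i$ are even, write $f = g_0(y)\,\bigl(\prod_i P_i^{e_i/2}\bigr)^2$ in $A[x]=\mathbb R[x,y]$. It remains to handle $g_0$: specializing again at a generic $y_0$ gives $g_0(y_0)\cdot(\text{square in }\mathbb R[x]) = s(x)^2$, and comparing top-degree coefficients in $x$ shows $g_0(y_0)$ is a square in $\mathbb R$, i.e. $g_0\ge 0$ on a cofinite subset of $\mathbb R$, hence $g_0\ge 0$ on all of $\mathbb R$ by continuity; but we do not even need $g_0$ to be a polynomial square — the statement only asks for $f=g(y)h(x,y)^2$ with $g\in\mathbb R[y]$, so we simply take $g\coloneqq g_0$ and $h\coloneqq\prod_i P_i^{e_i/2}\in\mathbb R[x,y]$ and we are done. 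The main obstacle I anticipate is the bookkeeping in the specialization step: ensuring that "generic $y_0$" simultaneously avoids the finitely many bad loci (vanishing of leading coefficients, of the relevant discriminant, and of the pairwise resultants) and that coprimality of the specialized $P_i$ genuinely prevents an irreducible $\mathbb R[x]$-factor of one $P_i(\cdot,y_0)$ from being absorbed into another, which is exactly what makes each multiplicity $e_i$ visible and hence forces it to be even.
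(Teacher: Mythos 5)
Your proof is correct and follows essentially the same strategy as the paper's: factor $f$ over $K[x]$ with $K=\mathbb R(y)$, use a generic specialization (controlled by a discriminant, and in your version also by pairwise resultants) together with the pointwise-square hypothesis to show that the odd/squarefree part of the factorization is trivial, and descend to $\mathbb R[x,y]$ via Gauss's lemma. The only organizational difference is that the paper works with the squarefree decomposition $f=c(y)\,s(x,y)\,q(x,y)^2$ and clears denominators and extracts contents at the end, whereas you normalize the irreducible factors to be primitive in $\mathbb R[y][x]$ from the outset, which makes the final descent of $g_0$ into $\mathbb R[y]$ immediate.
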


\begin{proof}
View $f$ as a polynomial in $x$ with coefficients in $A=\mathbb R[y]$. Since $K$ is a field, $K[x]$ is a unique factorization domain. Hence, by factoring $f$ into irreducibles in $K[x]$ and separating even and odd multiplicities, we may write
\[
f(x,y)=c(y)\,s(x,y)\,q(x,y)^2,
\]
with $c(y)\in K^\times$, $q\in K[x]$, and $s\in K[x]$ square-free in $x$.
Indeed, if
\[
f=c\prod_j p_j^{e_j}
\]
is a factorization into irreducibles in $K[x]$, then one may take
\[
q=\prod_j p_j^{\lfloor e_j/2\rfloor},
\qquad
s=\prod_{e_j\ \mathrm{odd}}p_j.
\]

If $\deg_x s\ge1$, standard discriminant theory gives a nonzero $\Delta(y)\in A$ such that for all $y_0$ with $\Delta(y_0)\neq0$ the specialized $s(\,\cdot\,,y_0)$ remains nonconstant and square-free in $\mathbb R[x]$. Hence for such $y_0$ the square-free part of $f(\,\cdot\,,y_0)$ is nontrivial, contradicting that $f(\,\cdot\,,y_0)$ is a square. Therefore $\deg_x s=0$, i.e. $s\in K^\times$.

If we absorb $s$ into $c(y)$ we get
\[
f(x,y)=c(y)\, q(x,y)^2\qquad\text{in }K[x].
\]
 Choose a common denominator $d(y)\in A\setminus\{0\}$ for the coefficients of $ q$ and set
\[
Q(x,y) \coloneqq d(y)\, q(x,y)\in A[x].
\]
Write $c(y)=a(y)/b(y)$ with $a,b\in A$ coprime. Multiplying $f=c\,q^{\,2}$ by $b(y)d(y)^2$ gives
\begin{equation}\label{eq:cleared}
b(y)\,d(y)^2\,f(x,y)\;=\;a(y)\,Q(x,y)^2 \qquad\text{in } A[x].
\end{equation}
Factor $a(y)$ in $A$ as $a(y)=a_{\sf sf}(y)\,a_{\sf sq}(y)^2$ with $a_{\sf sf}$ square-free and $a_{\sf sq}\in A$. Absorb $a_{\sf sq}^2$ into the square on the right-hand side of \eqref{eq:cleared} by setting
\[
H(x,y) \coloneqq a_{\sf sq}(y)\,Q(x,y)\in A[x].
\]
Then \eqref{eq:cleared} becomes
\begin{equation}\label{eq:lsqfree}
b(y)\,d(y)^2\,f(x,y)\;=\;a_{\sf sf}(y)\,H(x,y)^2 \qquad\text{in } A[x].
\end{equation}
Pass to $K[x]$ and divide \eqref{eq:lsqfree} by $b(y)\,d(y)^2$:
\[
f(x,y)=g_0(y)\,H(x,y)^2,\qquad g_0(y) \coloneqq \frac{a_{\sf sf}(y)}{b(y)\,d(y)^2}\in K.
\]
We extract the content with respect to $y$ of $H$:
\[
d_1(y) \coloneqq \operatorname{cont}_y\bigl(H(x,y)\bigr)\in A,\qquad H(x,y)=d_1(y)\,h(x,y),
\]
with $h\in A[x]$ primitive with respect to $y$. Then
\[
f(x,y)=\bigl(g_0(y)\,d_1(y)^2\bigr)\,h(x,y)^2\qquad\text{in }K[x].
\]
Since $h(x,y)^2$ is primitive in $A[x]$ and $f\in A[x]$, Lemma~\ref{lem:gauss-K-to-A} applied to
\[
F=h^2,\qquad G=g_0d_1^2
\]
yields
\[
g(y) \coloneqq g_0(y)\,d_1(y)^2\in A.
\]
Thus we obtain the desired factorization in $A[x]$:
\[
f(x,y)=g(y)\,h(x,y)^2,\qquad g\in A=\mathbb R[y],\ \ h\in A[x].
\]
\end{proof}

Next, we present an extension of the previous Lemma~\ref{lem:square-extraction} to rational functions. The idea of the proof is to apply Lemma~\ref{lem:square-extraction} to the numerator and denominator of the rational function.

\begin{lemma}\label{lem:square-extraction-rational}
Let $f(x,y)\in K(x)$ and assume that, for every $y_0\in\mathbb R^k$ the function $f(\,\cdot\,,y_0)\in\mathbb R(x)$ is a rational square in $x$.
Then there exist $g(y)\in K^\times$ and $h(x,y)\in K(x)$ such that
\[
f(x,y)=g(y)\,h(x,y)^2.
\]
\end{lemma}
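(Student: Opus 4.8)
The plan is to reduce this to Lemma~\ref{lem:square-extraction} by clearing denominators in $y$. First I would write $f=p/q$ with $p,q\in\mathbb R[x,y]$ and $q\neq0$, and introduce the single polynomial $F\coloneqq p\,q\in\mathbb R[x,y]$. The key elementary observation is that for a nonzero $q_0\in\mathbb R[x]$, if $p_0/q_0$ is a square in $\mathbb R(x)$, say $p_0/q_0=w^2$ with $w\in\mathbb R(x)$, then $p_0q_0=(q_0w)^2$; since $p_0q_0$ is a polynomial and a rational function whose square lies in $\mathbb R[x]$ must itself lie in $\mathbb R[x]$ (unique factorization in $\mathbb R[x]$), the product $p_0q_0$ is a \emph{polynomial} square. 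Applying this to specializations, $F(\,\cdot\,,y_0)=p(\,\cdot\,,y_0)\,q(\,\cdot\,,y_0)$ is a square in $\mathbb R[x]$ for every $y_0$ in the set $U\subseteq\mathbb R^k$ where $q(\,\cdot\,,y_0)\neq0$ (so that $f(\,\cdot\,,y_0)=p(\,\cdot\,,y_0)/q(\,\cdot\,,y_0)$ is defined and, by hypothesis, a rational square); the complement of $U$ is cut out by the simultaneous vanishing of all $x$-coefficients of $q$, hence is a proper algebraic subset of $\mathbb R^k$, so $U$ is Zariski dense.

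Next I would apply Lemma~\ref{lem:square-extraction} to $F$. That lemma is stated with the square hypothesis required at \emph{every} $y_0$, but its proof only uses it at a single $y_0$ avoiding the zero locus of the relevant discriminant and denominator polynomials; it therefore applies verbatim under the weaker hypothesis ``$F(\,\cdot\,,y_0)$ is a square for all $y_0$ in a Zariski dense set'', which we have just verified. This produces $g(y)\in\mathbb R[y]$ and $H(x,y)\in\mathbb R[x,y]$ with $F(x,y)=g(y)\,H(x,y)^2$. If $f\equiv0$ the statement is trivial (take $g=1$, $h=0$); otherwise $F\neq0$, so $g\in\mathbb R[y]\setminus\{0\}\subseteq K^\times$.

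Finally I would assemble the pieces inside $K(x)=\mathbb R(x,y)$:
\[
f=\frac{p}{q}=\frac{p\,q}{q^{2}}=\frac{F}{q^{2}}=g(y)\left(\frac{H(x,y)}{q(x,y)}\right)^{\!2},
\]
so that $h(x,y)\coloneqq H(x,y)/q(x,y)\in K(x)$ together with $g(y)\in K^\times$ gives the claimed factorization $f=g\,h^{2}$.

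The argument is essentially bookkeeping, and I expect the only points needing care to be: (i) the implication ``a polynomial that is a rational square is a polynomial square'', which is precisely where the non-algebraic-closedness of $\mathbb R$ causes no trouble; and (ii) the remark that Lemma~\ref{lem:square-extraction} really needs its square hypothesis only on a Zariski dense subset, so that the proper (hence negligible) exceptional locus $\mathbb R^k\setminus U$ introduced by clearing denominators does no damage. I do not anticipate any substantive obstacle beyond these routine checks.
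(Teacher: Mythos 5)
Your argument is correct, but it takes a different route from the paper's. The paper does not actually reduce to Lemma~\ref{lem:square-extraction}: it writes $f=P/Q$ with $P,Q$ coprime in $A[x]$, takes separate square-free factorizations of $P$ and $Q$ over $K[x]$, and argues via discriminant stability that both square-free parts $P_{\sf sf},Q_{\sf sf}$ must lie in $K^\times$, which yields the factorization directly --- in effect repeating the mechanism of the polynomial lemma in the rational setting. You instead pass to the single polynomial $F=pq$, observe (correctly, via integral closure of the UFD $\mathbb R[x]$) that $F(\cdot,y_0)=q(\cdot,y_0)^2 f(\cdot,y_0)$ is a \emph{polynomial} square for all $y_0$ off the proper closed set where $q(\cdot,y_0)\equiv 0$, invoke the polynomial lemma, and divide by $q^2$. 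This is cleaner in that it genuinely uses Lemma~\ref{lem:square-extraction} as a building block, but at the price you correctly identify: the lemma as \emph{stated} requires the square hypothesis at every $y_0$, so you are really using a mild strengthening (hypothesis only on a Zariski dense set), justified by reopening its proof and noting that only one specialization off the discriminant/denominator locus is needed. That observation is sound --- $\mathbb R^k$ is irreducible for the Zariski topology over the infinite field $\mathbb R$, so your set $U$ meets the nonempty open set $\{\Delta\neq 0\}$ --- but if this were to be written up, the weakened hypothesis should be stated explicitly as a variant of Lemma~\ref{lem:square-extraction} rather than cited silently. Your handling of the $f\equiv 0$ edge case and the final assembly $f=g\,(H/q)^2$ are both fine.
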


\begin{proof}
Write $f=P/Q$ with $P,Q\in A[x]$ coprime.
In $K[x]$ take the square-free factorizations in $x$:
\begin{equation*}
P=c_P(y)\,P_{\sf sf}(x,y)\,P_{\sf sq}(x,y)^2,\qquad
Q=c_Q(y)\,Q_{\sf sf}(x,y)\,Q_{\sf sq}(x,y)^2,
\end{equation*}
with $c_P,c_Q\in K^\times$, $P_{\sf sf},Q_{\sf sf}\in K[x]$ square-free in $x$, and $P_{\sf sq},Q_{\sf sq}\in K[x]$.
Then
\[
f=\frac{c_P}{c_Q}\cdot \frac{P_{\sf sf}}{Q_{\sf sf}}\cdot \Bigl(\frac{P_{\sf sq}}{Q_{\sf sq}}\Bigr)^2
\quad\text{in }K(x).
\]
By hypothesis, for every admissible specialization $y=y_0$, the function
\(
x\mapsto f(x,y_0)
\)
is a rational square; hence its square-free part
\(
x\mapsto \bigl(P_{\sf sf}/Q_{\sf sf}\bigr)(x,y_0)
\)
must be both a square and square-free, therefore constant in $x$.
Indeed, after clearing denominators, we may regard
$P_{\sf sf}$ and $Q_{\sf sf}$ as polynomials in $A[x]$. Since they are square-free in
$K[x]$, their discriminants with respect to $x$ are non-zero elements of $K$; after clearing
denominators, these give non-zero polynomials in the parameter variables. Recall that the
vanishing of the discriminant detects the presence of multiple roots; equivalently, the
discriminant may be expressed in terms of the resultant of a polynomial and its derivative
\cite[Ch.~3, \S6, Ex.~16]{CoxLittleOShea2015}. We also exclude the zero sets of the
leading coefficients, so that the degree in $x$ is preserved under specialization.
Therefore, outside a proper Zariski-closed subset of the parameter space, the specializations
of $P_{\sf sf}$ and $Q_{\sf sf}$ remain square-free and have the same degree in $x$. Hence,
if either $P_{\sf sf}$ or $Q_{\sf sf}$ had positive degree in $x$, the specialized square-free
part of $f(\,\cdot\,,y_0)$ would be nonconstant for all parameters in a nonempty Zariski-open
set. This contradicts the assumption that every specialization is a rational square.  Therefore
\[
P_{\sf sf},Q_{\sf sf}\in K^\times,
\]
i.e., $\deg_x P_{\sf sf}=\deg_x Q_{\sf sf}=0$.

After absorbing $P_{sf},Q_{sf}$ into $c_P,c_Q$,
respectively, it follows that
\[
f(x,y)=g(y)\,h(x,y)^2,\qquad
g(y) \coloneqq \frac{c_P(y)}{c_Q(y)}\in K^\times,\quad
h \coloneqq \frac{P_{\sf sq}}{Q_{\sf sq}}\in K(x).
\]
\end{proof}

Now we return to PH curves and prove that the condition ``every line maps to a PH curve'' rigidifies the first fundamental form of $\Phi$ to be a pointwise scalar multiple of a constant matrix.

\begin{lemma}\label{lem:metric-rigidity}
Let $\Phi\colon \mathbb R^2\to\mathbb R^n$ be regular (i.e. $D\Phi$ has maximal rank almost everywhere) and rational and assume that for every affine line $\ell(t)=a+t\,d\subset\mathbb{R}^2$ the curve $\Phi\circ\ell$ is PH. Then there exist a rational function $\lambda\colon \mathbb R^2\to(0,\infty)$ and a constant symmetric matrix
\[
S=\begin{pmatrix} a & b \\ b & c \end{pmatrix},\qquad a,b,c\in\mathbb R,\ \rank S=2,
\]
such that the first fundamental form of $\Phi$ equals
\[
G(u)=\lambda(u)^2\,S\quad\text{for all }u\in\mathbb R^2.
\]
\end{lemma}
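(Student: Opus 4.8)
The plan is to extract the pointwise metric content of the hypothesis ``every line maps to a PH curve'' by testing $\Phi$ against the two-parameter family of lines $\ell_{a,d}(t) = a + td$ and invoking the algebraic square-extraction machinery of Lemmas~\ref{lem:square-extraction} and~\ref{lem:square-extraction-rational}. For a fixed direction $d \in \mathbb{R}^2$, the squared speed of $\Phi \circ \ell_{a,d}$ is
\[
\|(\Phi \circ \ell_{a,d})'(t)\|^2 = d^\top G(a + td)\, d = Q_{a+td}(d),
\]
and the PH hypothesis says this rational function of $t$ is, for every choice of $a$ and $d$, a rational square. The first move is to reorganize this: fix $d$ and regard $Q_u(d)$ as a rational function of the point $u \in \mathbb{R}^2$; the condition that $Q_{a+td}(d)$ is a square in $t$ for every $a$ is a statement about restrictions to all lines of slope $d$, which by a change of variables and Lemma~\ref{lem:square-extraction-rational} (with $x$ one coordinate along the line and $y$ the transverse coordinate) forces $Q_\bullet(d) = \mu_d(u)\, h_d(u)^2$ with $\mu_d$ depending only on the transverse variable. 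Letting $d$ vary over several fixed rational directions and comparing the resulting factorizations should pin down $G(u)$ up to a scalar rational factor times a matrix independent of $u$.

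More concretely, I would argue as follows. Pick three distinct directions, say $d_1 = e_1$, $d_2 = e_2$, $d_3 = e_1 + e_2$; then $Q_u(d_1) = g_{11}(u)$, $Q_u(d_2) = g_{22}(u)$, and $Q_u(d_3) = g_{11}(u) + 2g_{12}(u) + g_{22}(u)$ are each, by the line hypothesis plus Lemma~\ref{lem:square-extraction-rational} applied in the appropriate coordinate, rational functions of $u = (u_1,u_2)$ that become squares upon restriction to \emph{every} line in the corresponding pencil. The key observation is that a rational function on $\mathbb{R}^2$ whose restriction to every affine line (in even one fixed pencil of parallel lines, in every pencil, actually) is a rational square must itself differ from a square by a factor depending on strictly fewer variables; iterating over two transverse pencils forces $g_{11} = \lambda_1^2$, $g_{22} = \lambda_2^2$, and $g_{11} + 2g_{12} + g_{22} = \lambda_3^2$ globally on $\mathbb{R}^2$, for rational $\lambda_i$. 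Regularity (maximal rank a.e.) guarantees $G$ is positive definite on a dense open set, so these squares are not identically zero and $\lambda_i$ can be taken positive there. Then $2g_{12} = \lambda_3^2 - \lambda_1^2 - \lambda_2^2$ is rational, and — using a fourth direction such as $d_4 = e_1 - e_2$ to get $g_{11} - 2g_{12} + g_{22} = \lambda_4^2$ — one sees that the ratios $g_{11} : g_{12} : g_{22}$ must be \emph{constant} in $u$: indeed, $Q_u(d)$ is a square for \emph{every} real direction $d$, and a binary quadratic form $Q_u$ in $d$ that is a perfect square (as a form in $d$) for each $u$ has rank $\le 1$ unless its coefficient functions are proportional with constant ratios; combining this with the positive-definiteness on a dense set forces $G(u) = \lambda(u)^2 S$ with $S$ a fixed symmetric matrix of rank $2$.

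The main obstacle I anticipate is the step asserting that a rational function on $\mathbb{R}^2$ that restricts to a rational square on every line of a pencil is globally a square times a one-variable factor — Lemma~\ref{lem:square-extraction-rational} is stated for polynomial/rational dependence on a parameter $y$ and a variable $x$, and one must set up the coordinates so that ``restriction to each line of the pencil'' literally becomes ``specialization of $y$'', then handle the passage between the two coordinate systems (rotating $d$ to an axis direction, clearing denominators) without losing control of which factor depends on which variable. A secondary subtlety is the transition from ``$Q_u(d)$ is a square in $t$ along each line'' to ``$Q_u(d)$ is a square \emph{as a form in $d$} at each fixed $u$'': the former is the hypothesis, the latter is what pins down the constancy of $S$, and bridging them requires noting that a rational function of $t$ of the form $\alpha + \beta t + \gamma t^2$ (with $\alpha,\beta,\gamma$ depending on $u$ through $g_{ij}(a)$, $g_{ij}$ differentiated, etc.) being a square for all $a$ constrains the $g_{ij}$ themselves — this is where the Tschirnhausen-free, ``lines only'' part of the argument does its real work, and it is the place most likely to need a careful discriminant computation that I have suppressed here.
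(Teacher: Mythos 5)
Your opening move coincides with the paper's: for each direction $d$ the squared speed along $\ell_{a,d}$ is $d^\top G(a+td)\,d$, and Lemma~\ref{lem:square-extraction-rational} (applied with the transverse coordinate as the parameter) yields a factorization $d^\top G(u)\,d=\mu_d(y)\,h_d(u)^2$. But from there the argument has two genuine gaps. First, the claim that ``iterating over two transverse pencils forces $g_{11}=\lambda_1^2$ globally'' is not available: the PH hypothesis for lines in direction $e_2$ constrains $Q_u(e_2)=g_{22}$ along those lines, not $g_{11}$, so $g_{11}$ is only ever tested along the single pencil of lines parallel to $e_1$. That single pencil leaves the transverse factor $\mu_{e_1}(u_2)$ completely undetermined (any positive rational function of $u_2$ is compatible with every horizontal line being PH, since a positive real constant is a real square), and removing exactly this one-variable ambiguity is the whole content of the lemma. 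Second, the ``key observation'' you lean on to get constancy of the ratios $g_{11}:g_{12}:g_{22}$ is not correct as stated: the hypothesis never says that $Q_u(d)$ is a perfect square \emph{as a form in} $d$, and it cannot be, since a positive definite binary form is never the square of a linear form --- there is no ``unless the coefficients are proportional with constant ratios'' branch --- so this route does not produce a constant matrix $S$.

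The missing mechanism is the paper's transport argument. Writing $F_d(a,t)=\sigma_d(a)\,\lambda_d(a,t)^2$ and using that the same point $u$ admits the representations $u=a+td=(a+sd)+(t-s)d$, one compares the two factorizations and obtains $\lambda_d(a+sd,t-s)=\mu_d(a,s)\,\lambda_d(a,t)$ for all $t$; this lets $\lambda_d$ descend (after normalization) to a function $\lambda(u)$ of the point alone and forces $\sigma_d$ to depend only on the direction $d$, giving the separated form $d^\top G(u)\,d=\lambda(u)^2\sigma(d)$. Only then does polarization in $d$ produce the constant symmetric matrix $S$ with $G(u)=\lambda(u)^2 S$, with $\rank S=2$ following from regularity on an open set. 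You correctly identify where the difficulty lies (you flag it yourself in your final paragraph), but the proposal does not supply this step, so as written it does not prove the lemma.
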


\begin{proof}
Fix a direction $d\in\mathbb R^2$. For $a\in\mathbb R^2$ and $t\in\mathbb R$, set
\(\ell(t)=a+td\)
and define
\[
F_d(a,t) \coloneqq \big\|(\Phi\circ\ell)'(t)\big\|^2=d^\top G(a+t\,d)\,d.
\]

By hypothesis, for each fixed $a$, the function $t\mapsto F_d(a,t)$ is a rational square in~$t$.

\smallskip

We apply ``square extraction'' according to Lemma~\ref{lem:square-extraction-rational} to $F_d$, viewed as a rational map in $t$ with parameter $a\in\mathbb R^2$ and obtain rational functions $\sigma_d(a)$ and $\lambda_d(a,t)$ such that
\begin{equation}\label{eq:Fd-fact}
F_d(a,t)=\sigma_d(a)\,\lambda_d(a,t)^2.
\end{equation}
The same point $u\in\mathbb R^2$ on the line of direction $d$ admits many representations
$u=a+t\,d=a'+t'd$ with $a'=a+s\,d$, $t'=t-s$. Hence
\[
F_d(a,t)=d^\top G(u)\,d=F_d(a+s\,d,t-s).
\]
Insert \eqref{eq:Fd-fact} for $(a,t)$ and for $(a+s\,d,t-s)$:
\[
\sigma_d(a)\,\lambda_d(a,t)^2=\sigma_d(a+s\,d)\,\lambda_d(a+s\,d,t-s)^2\qquad(\forall t,s\in\mathbb R).
\]
Fix $a$ and $s$; the ratio
\[
R_{a,s}(t) \coloneqq \frac{\lambda_d(a+s\,d,t-s)^2}{\lambda_d(a,t)^2}
\]
satisfies $R_{a,s}(t)=\sigma_d(a)/\sigma_d(a+s\,d)$, which is independent of $t$. Thus there exists $\mu_d(a,s)\neq0$ with
\begin{equation}\label{eq:mu-transport}
\lambda_d(a+s\,d,t-s)=\mu_d(a,s)\,\lambda_d(a,t)\qquad\text{for all }t.
\end{equation}
Choose, for each line $L$ parallel to $d$, a base point $\bar a=\bar a(L)\in L$.
Every $u\in L$ can be written \emph{uniquely} as
\[
u=\bar a(L)+t\,d,
\]
and we define
\[
\lambda(u) \coloneqq \lambda_d\bigl(\bar a(L),\,t\bigr).
\]

We claim that this definition is independent of how $u$ is represented on the same line.

Indeed, set
\[
\kappa_d(a,t) \coloneqq \frac{\lambda_d(a,t)}{\lambda_d(a,0)}.
\]
From \eqref{eq:mu-transport} it follows that
$\kappa_d(a,t)=\kappa_d(a+s\,d,\,t-s)$, so $\kappa_d(a,t)$ depends only on the
point $u=a+t\,d$. One can then define $\lambda(u) \coloneqq \kappa_d(a,t)$,
which makes the independence of the representative immediate.

Rewriting \eqref{eq:Fd-fact} in terms of $u=a+t\,d$ we obtain
\begin{equation}\label{eq:sep-line}
d^\top G(u)\,d=\sigma_d(a)\,\lambda(u)^2.
\end{equation}
If $u=a+t\,d=a'+t'd$, then $a'=a+s\,d$ for some $s$; comparing \eqref{eq:sep-line} for $a$ and $a'$ yields $\sigma_d(a)=\sigma_d(a')$. Therefore there exists a function $\sigma(d)$ depending only on the direction $d$ such that
\begin{equation}\label{eq:sep-final}
d^\top G(u)\,d=\lambda(u)^2\,\sigma(d)\qquad(\forall\,u\in\mathbb R^2,\ \forall\,d\in\mathbb R^2).
\end{equation}

\smallskip

For fixed $u$, $d\mapsto d^\top G(u)\,d$ is a quadratic form in $d$. Equality \eqref{eq:sep-final} shows it is $\lambda(u)^2$ times a fixed quadratic form $\sigma(d)$, independent of $u$. Hence $\sigma(d)=d^\top S\,d$ for a constant symmetric matrix $S$. By polarization,
\[
d_1^\top G(u)\,d_2
=\tfrac14\big((d_1{+}d_2)^\top G(u)(d_1{+}d_2)-(d_1{-}d_2)^\top G(u)(d_1{-}d_2)\big)
=\lambda(u)^2\, d_1^\top S\, d_2,
\]
i.e. $G(u)=\lambda(u)^2\,S$ for all $u$. Since $\Phi$ is not everywhere singular, $G(u)$ is positive-definite on a nonempty open set; hence $S$ is positive-definite and $\rank S=2$.
\end{proof}

Precomposing $\Phi$ with a fixed linear isomorphism $A$ on the source with $A^\top A=S$ reduces $S$ to $I_2$. In particular, after a fixed linear change of coordinates on the source, $G(u)=\lambda(u)^2 I_2$. This is all we can get from PH-preservation on lines only. In order to obtain the full statement of Theorem~\ref{th:RmtoRn}, we look at one non-linear PH curve, the Tschirnhaus cubic of Example~\ref{ex:tschirnhausen}.

\begin{proof}[Proof of necessity in Theorem~\ref{th:RmtoRn} for $m=2$]
By Lemma~\ref{lem:metric-rigidity}, there exist a rational function $\lambda\colon \mathbb R^2\to(0,\infty)$ and a constant symmetric positive-definite matrix
\[
S=\begin{pmatrix} a & b \\ b & c \end{pmatrix}
\]
such that the first fundamental form of $\Phi$ satisfies $G(u)=\lambda(u)^2\,S$ for all $u\in\mathbb R^2$.
We point out, for future use, that positive definiteness of $S$ implies
\[
a>0,\qquad c>0,\qquad ac-b^2>0.
\]

Consider the Tschirnhaus PH cubic $r(t) = \bigl(t^2-1,\frac{1}{\sqrt{3}}t(t^2-1)\bigr)$ of Equation~\eqref{eq:tschirnhausen} in Example~\ref{ex:tschirnhausen}. Since $\Phi$ is PH-preserving, the  squared speed of $\Phi\circ r$ is a square in $\mathbb{R}(t)$:
\[
\|(\Phi \circ r)'(t)\|^2
= r'(t)^{\!\top}\,G(r(t))\,r'(t)
=\lambda(r(t))^2\cdot r'(t)^{\!\top}S\,r'(t).
\]
Set $P(t) \coloneqq r'(t)^{\!\top}S\,r'(t)$. Since $\lambda(r(t))^2$ is itself a square, it follows that $P(t)$ must be a  square polynomial. A direct expansion gives
\[
P(t)
=3c\,t^4 + 4\sqrt{3}\,b\,t^3 + (4a-2c)\,t^2 - \frac{4\sqrt{3}}{3}\,b\,t + \frac{c}{3}.
\]
Hence there exist real constants $\alpha,\beta,\gamma$ such that
\[
P(t)=(\alpha t^2+\beta t+\gamma)^2,
\]
and coefficient comparison yields the system
\begin{align}
\alpha^2 &= 3c, \label{eq:coef1}\\
2\alpha\beta &= 4\sqrt{3}\,b, \label{eq:coef2}\\
2\alpha\gamma+\beta^2 &= 4a-2c, \label{eq:coef3}\\
2\beta\gamma &= -\frac{4\sqrt{3}}{3}\,b, \label{eq:coef4}\\
\gamma^2 &= \frac{c}{3}. \label{eq:coef5}
\end{align}

From \eqref{eq:coef1}–\eqref{eq:coef5} we first exclude $b\neq0$. Indeed, if $b\neq0$, then from
\eqref{eq:coef2} and \eqref{eq:coef4} we obtain $\beta=\frac{2\sqrt{3}}{\alpha}b$ and
$\beta\gamma=-\frac{2\sqrt{3}}{3}b$, hence $\frac{\gamma}{\alpha}=-\frac{1}{3}$ and thus $\alpha\gamma=-\frac{\alpha^2}{3}=-c$ by \eqref{eq:coef1}.
Plugging this into \eqref{eq:coef3} gives $\beta^2=4a$, while $\alpha^2=3c$ implies
\[
\beta^2=\frac{4\cdot 3\,b^2}{\alpha^2}=\frac{12\,b^2}{3c}=\frac{4b^2}{c}.
\]
Therefore $a=\frac{b^2}{c}$, whence
\[
ac-b^2=0,
\]
contradicting the inequality $ac-b^2>0$ recalled above.
Hence necessarily $b=0$.

With $b=0$, Equations~\eqref{eq:coef2} and \eqref{eq:coef4} give $\beta=0$. Then \eqref{eq:coef1}, \eqref{eq:coef3}, \eqref{eq:coef5} reduce to
\[
\alpha^2=3c,\qquad 2\alpha\gamma=4a-2c,\qquad \gamma^2=\frac{c}{3}.
\]
From the first and the last equalities we get $\alpha\gamma=\pm c$. The second equality then implies
\[
4a-2c=\pm 2c.
\]
Since $a>0$, we conclude that
\[
4a-2c=2c,
\]
whence $a=c$.

We have shown that $S=a\,I_2$ with $a>0$, hence $G(u)=\lambda(u)^2\,a\,I_2$. Absorbing the positive
constant $a$ into $\lambda$ gives $G(u)=\tilde\lambda(u)^2\,I_2$.
Since $\lambda$ is rational, so is $\tilde\lambda$.
\end{proof}

\subsection{The higher-dimensional case}
\label{sec:higher-dimensional}

Throughout this section we fix integers $m$, $n\ge 2$, $m \le n$ and work with rational maps $\Phi\colon \mathbb R^{m}\to\mathbb R^{n}$. As before, $G=(D\Phi)^{\top}(D\Phi)$ denotes the first fundamental form.

The argument used in our proof of Lemma~\ref{lem:metric-rigidity} extends verbatim to arbitrary dimension.

\begin{lemma}[Metric rigidity in any dimension]\label{lem:metric-rigidity-m}
Let $\Phi\colon \mathbb R^{m}\to\mathbb R^{n}$ be rational and assume that for every affine line $\ell(t)=a+t\,d$ in $\mathbb R^{m}$ the curve $\Phi\circ\ell$ is PH. Then there exist a function $\lambda\colon \mathbb R^{m}\to(0,\infty)$ and a constant symmetric matrix $S\in\mathbb R^{m\times m}$ with $\operatorname{rank}S=m$ such that
\[
G(u)=\lambda(u)^2\,S\qquad(\forall u\in\mathbb R^{m}).
\]
\end{lemma}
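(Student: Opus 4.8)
The plan is to mimic the two-dimensional argument from Lemma~\ref{lem:metric-rigidity} line by line, since the only genuinely geometric input there was the freedom to reparametrize a point on a line. First I would fix a direction $d\in\mathbb{R}^m$ and set $F_d(a,t) \coloneqq \|(\Phi\circ\ell)'(t)\|^2 = d^\top G(a+t\,d)\,d$ for $\ell(t)=a+t\,d$, with $a$ now ranging over $\mathbb{R}^m$ (equivalently, over a transversal hyperplane). By hypothesis $t\mapsto F_d(a,t)$ is a rational square for each fixed $a$, so Lemma~\ref{lem:square-extraction-rational}---whose statement already allows a parameter $y\in\mathbb{R}^k$ of arbitrary dimension---yields rational functions $\sigma_d(a)$ and $\lambda_d(a,t)$ with $F_d(a,t)=\sigma_d(a)\,\lambda_d(a,t)^2$.

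Next I would run exactly the same transport argument: a point $u$ on the line through $a$ in direction $d$ has many representations $u = a+t\,d = (a+s\,d)+(t-s)\,d$, so comparing the two factorizations forces $\lambda_d(a+s\,d, t-s) = \mu_d(a,s)\,\lambda_d(a,t)$ for a nonvanishing factor $\mu_d(a,s)$. Introducing the normalized quantity $\kappa_d(a,t) \coloneqq \lambda_d(a,t)/\lambda_d(a,0)$, one checks $\kappa_d(a,t)=\kappa_d(a+s\,d,t-s)$, so $\kappa_d$ depends only on the point $u=a+t\,d$; this defines $\lambda(u)$ unambiguously, and rewriting gives $d^\top G(u)\,d = \sigma_d(a)\,\lambda(u)^2$. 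Comparing representatives shows $\sigma_d(a)$ is constant along lines parallel to $d$, hence depends only on the direction: $d^\top G(u)\,d = \lambda(u)^2\,\sigma(d)$ for all $u$ and all $d$. This step is where one should be slightly careful that nothing is special about $m=2$; the key point is that the relation $u=a+t\,d$ has a one-dimensional family of solutions $(a,t)$ regardless of $m$, so the transport and the well-definedness of $\lambda$ go through verbatim.

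For fixed $u$, the map $d\mapsto d^\top G(u)\,d$ is a quadratic form in the $m$ variables $d$, and the displayed identity says it equals $\lambda(u)^2$ times the $u$-independent quadratic form $\sigma(d)$; hence $\sigma(d)=d^\top S\,d$ for a constant symmetric $S\in\mathbb{R}^{m\times m}$, and polarization gives $G(u)=\lambda(u)^2\,S$ for all $u$. Since $\Phi$ is regular, $G(u)$ is positive-definite on a nonempty open set, so $S$ is positive-definite; in particular $\operatorname{rank}S=m$ and $\lambda$ is real-valued and nonvanishing, hence (being a ratio of components of the rational $G$) rational with values in $(0,\infty)$. I do not expect a real obstacle here: the proof of Lemma~\ref{lem:metric-rigidity} was written so as to be dimension-agnostic, and the only thing to verify is that every invocation of Lemma~\ref{lem:square-extraction-rational} and every algebraic identity survives the replacement of $\mathbb{R}^2$ by $\mathbb{R}^m$, which it does. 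Accordingly I would simply write: \emph{the proof of Lemma~\ref{lem:metric-rigidity} applies verbatim, replacing $\mathbb{R}^2$ by $\mathbb{R}^m$ and $2\times 2$ matrices by $m\times m$ matrices throughout}, and only spell out the handful of places (the shape of $S$, the positivity/rank conclusion) where the dimension appears explicitly.
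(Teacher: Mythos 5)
Your proposal matches the paper's proof essentially line for line: the paper also proves this lemma by observing that the argument of Lemma~\ref{lem:metric-rigidity} (square extraction via Lemma~\ref{lem:square-extraction-rational}, transport along lines, polarization, and positive-definiteness on an open set for the rank claim) carries over verbatim to $\mathbb{R}^m$. Your proposal is correct and takes the same approach.
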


\begin{proof}
For each $d\in\mathbb R^{m}$ consider $F_d(a,t) \coloneqq \|(\Phi\circ\ell)'(t)\|^2=d^{\top}G(a+td)\,d$. Apply square extraction exactly as in Lemma~\ref{lem:metric-rigidity} to get $F_d(a,t)=\sigma_d(a)\,\lambda_d(a,t)^2$. Transport along the line implies $d^{\top}G(u)d=\lambda(u)^2\,\sigma(d)$ with $\sigma$ independent of $u$. Polarization in $\mathbb R^{m}$ yields $G(u)=\lambda(u)^2\,S$ with $S$ constant symmetric. Full rank follows from non-degeneracy of $\Phi$ on an open set.
\end{proof}

The next observation forces the constant symmetric matrix $S$ to be a scalar multiple of the identity by looking at all $2$-planes.

\begin{proposition}\label{prop:two-plane-reduction}
Assume $m\ge2$. Let $S\in\mathbb R^{m\times m}$ be symmetric positive definite and suppose that for every $2$-dimensional subspace $\Pi\subset\mathbb R^{m}$ the restriction $S|_{\Pi}$ is a scalar multiple of the identity on $\Pi$.
Then $S=c\,I_m$ for some positive $c \in \mathbb{R}$.
\end{proposition}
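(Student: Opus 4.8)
The plan is to test the hypothesis only on the coordinate $2$-planes and to read off the entries of $S$ directly. Let $e_1,\dots,e_m$ denote the standard orthonormal basis of $\mathbb R^m$, and for a pair of distinct indices $i\ne j$ set $\Pi_{ij}=\operatorname{span}\{e_i,e_j\}$. The assertion that $S|_{\Pi_{ij}}$ is a scalar multiple of the identity on $\Pi_{ij}$ is to be read relative to the ambient Euclidean structure restricted to $\Pi_{ij}$: it means $u^\top S v = c_{ij}\,\langle u,v\rangle$ for all $u,v\in\Pi_{ij}$ and some scalar $c_{ij}\in\mathbb R$. Evaluating on the orthonormal pair $\{e_i,e_j\}$ yields
\[
e_i^\top S\,e_i = e_j^\top S\,e_j = c_{ij},
\qquad
e_i^\top S\,e_j = 0 .
\]
Hence every off-diagonal entry $S_{ij}$ ($i\ne j$) vanishes, and for every such pair the diagonal entries satisfy $S_{ii}=S_{jj}$.

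The second step is a trivial connectedness argument: since $S_{ii}=S_{jj}$ for all $i\ne j$, all diagonal entries of $S$ coincide; call the common value $c$. Together with $S_{ij}=0$ for $i\ne j$ this gives $S=c\,I_m$. Finally, positive definiteness forces $c=e_1^\top S\,e_1>0$, which is the claim. (Note that only the coordinate $2$-planes were used, so the hypothesis is stronger than needed.)

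There is no genuine obstacle here; the only point deserving care is the interpretation of ``scalar multiple of the identity on $\Pi$'', which must be taken with respect to the induced inner product on $\Pi$ rather than an arbitrary basis. With that reading the coordinate computation above is immediate. If one prefers a coordinate-free formulation, the same conclusion follows by diagonalizing $S$ with an orthogonal matrix and applying the hypothesis to the $2$-planes spanned by pairs of eigenvectors: the restriction of $S$ to such a plane is diagonal with entries the two eigenvalues, and being a scalar multiple of the identity forces those eigenvalues to be equal, so all eigenvalues coincide and $S=c\,I_m$.
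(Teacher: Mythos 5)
Your proof is correct and follows essentially the same route as the paper: restrict $S$ to the coordinate $2$-planes $\operatorname{span}\{e_i,e_j\}$, read off $s_{ij}=0$ and $s_{ii}=s_{jj}$, and conclude $S=c\,I_m$ with $c>0$ from positive definiteness. Your added remarks on the interpretation of ``scalar multiple of the identity'' and the alternative eigenvector argument are fine but not needed.
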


\begin{proof}
Fix an orthonormal basis $(e_1,\dots,e_m)$ of $\mathbb{R}^m$. For each pair $i\neq j$, the restriction of $S$ to $\Pi_{ij}=\operatorname{span}\{e_i,e_j\}$ has the matrix
\(\begin{psmallmatrix} s_{ii} & s_{ij}\\ s_{ij} & s_{jj}\end{psmallmatrix}\), which is proportional to $I_2$; hence $s_{ij}=0$ and $s_{ii}=s_{jj}$. Varying $(i,j)$ shows that all off-diagonal entries vanish and all diagonal entries are equal, i.e. $S=c\,I_m$.
\end{proof}


We now add the final missing piece to our proof of Theorem~\ref{th:RmtoRn}, necessity in higher-dimensional cases.

\begin{proof}[Proof of necessity in Theorem~\ref{th:RmtoRn} for $m>2$]
By Lemma~\ref{lem:metric-rigidity-m}, $G=\lambda^2 S$ with $S$ constant and symmetric. Fix any $2$-plane $\iota\colon \mathbb R^{2}\hookrightarrow\mathbb R^{m}$. Since PH curves in $\mathbb R^{2}$ are PH in $\mathbb R^{m}$, the map
$\Phi\circ\iota\colon \mathbb R^{2}\to\mathbb R^{n}$ is PH-preserving.
Applying the already proved statement of Theorem~\ref{th:RmtoRn} for the case $m=2$ to $\Phi\circ\iota$ yields
$G|_{\Pi}=\lambda^2 I_2$ on $\Pi=\iota(\mathbb R^{2})$.
But $G|_{\Pi}=\lambda^2\,S|_{\Pi}$, hence $S|_{\Pi}$ is a scalar multiple of $I_2$ for every $2$-plane $\Pi$.
By Proposition~\ref{prop:two-plane-reduction}, $S=c\,I_m$ with $c \in \mathbb{R}$ positive. Absorbing $\sqrt{c}$ into $\lambda$ yields the desired statement.
%
\end{proof}

%

\section{Conclusions}
\label{sec:conclusions}

In this work, we have provided a complete characterization of the mappings that preserve the Pythagorean-hodograph property in all dimensions. The central Theorem~\ref{th:RmtoRn} shows that PH-preservation is equivalent to conformality together with the requirement that the dilation be the square of a rational function. While the previously known sufficiency of this condition follows in a rather straightforward way, the proof of its necessity is far from trivial and requires a combination of geometric and algebraic arguments, which we developed in Section~\ref{sec:proofs}.

Far from being purely theoretical, this result enables effective constructions, as demonstrated in Section~\ref{sec:examples}. PH-preserving mappings from $\mathbb{R}^2 \to \mathbb{R}^2$ are special meromorphic maps that can be constructed via partial fraction decomposition and Laurent series. PH-preserving mappings from $\mathbb{R}^2 \to \mathbb{R}^3$ are strongly related to minimal surfaces. Various geometric problems, such as boundary data interpolation, can be solved using these mappings.

A natural continuation of this research would be to study, in a similar way, the mappings that preserve the PH property in other metrics; see \cite{minkowskiPreserving}. We also plan to investigate the classification of rational surfaces suggested in Example~\ref{ex:Sufr}.
\section*{Acknowledgements}

The authors are grateful to the anonymous referees for their careful reading of the manuscript and for their helpful comments and suggestions, which contributed to improving the clarity of the exposition.

\end{document}